\newtheorem{theorem}{Theorem}[section]
\newtheorem{lemma}[theorem]{Lemma}
\theoremstyle{definition}
\numberwithin{equation}{section}
\renewcommand{\l}{\lambda}
\newcommand{\g}{{\rm g}}
\newcommand{\dsq}{d^2\!}
\newcommand{\RR}{\ensuremath{\mathbb{R}}}
\newcommand{\prtl}{\ensuremath{\partial}}
\newcommand{\supp}{\ensuremath{\text{supp}}}
\newcommand{\veps}{\ensuremath{\varepsilon}}
\title[$L^q$ bounds on restricted spectral clusters]{$L^q$ bounds on restrictions of spectral clusters to submanifolds for low regularity metrics}
\thanks{The author was supported in part by the National Science
Foundation grant DMS-1001529.}
\author[M. D. Blair]{Matthew D. Blair}
\address{Department of Mathematics and Statistics, University of New Mexico,
  Albuquerque, NM 87131, USA}
\email{blair@math.unm.edu}
\begin{document}
\maketitle
\begin{abstract}
We prove $L^q$ bounds on the restriction of spectral clusters to submanifolds in Riemannian manifolds equipped with metrics of $C^{1,\alpha}$ regularity for $0 \leq \alpha \leq 1$. Our results allow for Lipschitz regularity when $\alpha =0$, meaning they give estimates on manifolds with boundary.  When $0< \alpha \leq 1$, the scalar second fundamental form for a codimension 1 submanifold can be defined, and we show improved estimates when this form is negative definite.  This extends results of Burq-G\'erard-Tzvetkov and Hu to manifolds with low regularity metrics.
\end{abstract}
\section{Introduction}
Let $M$ be a compact, smooth manifold of dimension $n \geq 2$ equipped with Riemannian metric $\g$ of at least Lipschitz regularity.  Let $\Delta_\g$ denote the associated (negative) Laplace-Beltrami operator whose action in coordinates is given by the differential operator
$$
\Delta_\g f = \frac{1}{\sqrt{\det \g_{kl}}} \sum_{i,j}\prtl_i\left(\g^{ij}\sqrt{\det \g_{kl}}\,\prtl_jf \right).
$$
There exists an orthonormal basis $\{ \phi_j\}_{j=1}^\infty$ of $L^2(M)$ consisting of eigenfunctions of $\Delta_\g$, which can be seen by passing to quadratic forms (see e.g. \cite[\S1]{SmC1alph}). We write the corresponding Helmholtz equation for $\phi_j$ as
$
(\Delta_\g  + \lambda_j^2)\phi_j =0
$
so that $\l_j$ gives the frequency of vibration associated to $\phi_j$.

Given $\l \geq 1$, we let $\Pi_\l$ be the projection operator on $L^2(M)$ defined by $
\Pi_\l f := \sum_{\l_j \in [\l, \l+1]} \langle f, \phi_j\rangle \phi_j,
$
where $\langle \cdot, \cdot \rangle $ denotes the usual $L^2$ inner product with respect to the Riemannian measure.  We call functions $f$ which are in the range of some $\Pi_\l$ ``spectral clusters".  They form approximate eigenfunctions or quasimodes as $\|(\Delta_\g  + \lambda^2)\Pi_\l f\|_{L^2(M)} \leq C\l \|f\|_{L^2(M)}$. In \cite{sogge88}, Sogge proved that when $\g$ is a $C^\infty$ metric, the following $L^q$ bounds on the projections $\Pi_\l f$ are satisfied for $q \geq 2$
\begin{equation}\label{soggebounds}
\|\Pi_\l f\|_{L^q(M)} \leq C \l^{\delta} \|f\|_{L^2(M)},
\end{equation}
where $\delta=\delta(q) = \max(\frac{n-1}{2}(\frac 12 - \frac 1q), n(\frac 12 -\frac 1q )-\frac 12)$.  He also provided examples showing that the exponent $\delta(q)$ is the best possible for these approximate eigenfunctions.  Since $\Pi_\l$ is a projection operator, any $L^q$ bound it satisfies implies $L^q$ bounds on individual eigenfunctions.  Determining when these bounds are sharp for subsequences of eigenfunctions is an area of active interest, though we do not examine this issue here.

In \cite{SmC2}, Smith proved that the bounds \eqref{soggebounds} are satisfied for $C^{1,1}$ metrics.  The assumption of $C^{1,1}$ regularity is the lowest degree of continuity needed to ensure the uniqueness of geodesics on $M$.  Since eigenfunctions naturally give rise to solutions to the wave equation, propagation of singularities suggests that this is a relevant consideration for the validity of such bounds.  Indeed, works of Smith-Sogge \cite{sS94} and Smith-Tataru \cite{smithtatexample} give examples of $C^{1,\alpha}$ metrics (Lipschitz when $\alpha =0$) which give rise to spectral clusters $\Pi_\l f_\l = f_\l$ for each $\l \geq 1$ such that
\begin{equation}\label{lowregexample}
\frac{\|f_\l\|_{L^q(M)}}{\|f_\l \|_{L^2(M)}} \geq c\l^{\frac{n-1}{2}(\frac 12-\frac 1q)(1+\sigma)}, \qquad \sigma = \frac{1-\alpha}{3+\alpha},
\end{equation}
showing that the bounds \eqref{soggebounds} cannot hold for $2 < q < \frac{2(n+2(1+\alpha)^{-1})}{n-1}$.  In each case, the cluster $f_\l$ is highly concentrated in a tube about a curve segment of length 1 and diameter $\l^{-\frac{2}{3+\alpha}}$ (cf. \eqref{alphatube} below).  This shows that the family $\{f_\l \}_{\l \geq 1}$ exhibits a greater degree of concentration than Sogge's examples which saturate the bounds \eqref{soggebounds} when $2<q\leq \frac{n-1}{2}(\frac 12 - \frac 1q)$ (they are concentrated in tubes with with diameter $\l^{-\frac 12}$).  In \cite{SmC1alph}, Smith showed positive results for any $C^{1,\alpha}$ metric, proving that the that the ratio on the left in \eqref{lowregexample} is always bounded above by $C\l^{\frac{n-1}{2}(\frac 12-\frac 1q)(1+\sigma)}$ when $2 \leq q \leq \frac{2(n+1)}{n-1}$.  He also proved that the bound \eqref{soggebounds} holds when $q=\infty$.  By interpolation, this shows \eqref{soggebounds} with a loss of $\sigma/q$ derivatives when $\frac{2(n+1)}{n-1} \leq q \leq \infty$, though a subsequent work of Koch-Smith-Tataru \cite{kstsharp} improves upon this.

In a similar vein, when $\g \in C^\infty$, results of Burq-G\'erard-Tzvetkov \cite{bgtrestr}, Hu \cite{Hu}, and Reznikov \cite{reznikov} show $L^q$ bounds on the restriction of these spectral clusters to embedded submanifolds $P \subset M$ of the form
\begin{equation}\label{restrictionbounds}
\|\Pi_\l f\|_{L^q(P)} \leq C \l^{\delta} \|f\|_{L^2(M)} \qquad q \geq 2
\end{equation}
where $\|\Pi_\l f\|_{L^q(P)}$ taken to mean the $L^q$ norm of the restriction $\Pi_\l f\big|_P$.  In this case, $\delta = \delta(k,q)$ depends on the dimension of the submanifold $k$ and on $q$.  In particular, when $k=n-1$, $\delta = \max(\frac{n-1}{2}-\frac{n-1}{q},\frac{n-1}{4}-\frac{n-2}{2q})$, that is,
\begin{equation}\label{deltadefhyper}
\delta(n-1,q) =
\begin{cases}
\frac{n-1}{2}-\frac{n-1}{q}, & \text{if } \frac{2n}{n-1} \leq q \leq \infty, \\
\frac{n-1}{4}-\frac{n-2}{2q}, & \text{if } 2 \leq q \leq \frac{2n}{n-1}
\end{cases}.
\end{equation}
Otherwise, when $1 \leq k \leq n-2$,
\begin{equation}\label{deltadef}
\delta(k,q) =
\frac{n-1}{2}-\frac kq
\end{equation}
with the exception of $(k,q)=(n-2,2)$ where there is a logarithmic loss for $\l \geq 2$, $\|\Pi_\l f\|_{L^2(P)} \leq C (\log \l)^{\frac 12} \l^{\frac 12}\|f\|_{L^2(M)}$.  These bounds were proved in a semiclassical setting by Tacy \cite{tacy}.  We also remark that the bound \eqref{restrictionbounds} in the case $k=n-1$, $q=2$ was previously observed by Tataru \cite{tatarubdry} as a consequence of the estimates in Greenleaf-Seeger \cite{greenleafseeger}.  As will be discussed in \S2, these bounds provide an improvement over what would be obtained by trace theorems for Sobolev spaces.

One reason the bounds \eqref{soggebounds}, \eqref{restrictionbounds} are of such great interest is that they illuminate the size and concentration properties of eigenfunctions.  In particular, Smith's work on $C^{1,\alpha}$ metrics \cite{SmC1alph} is significant in that it addresses concentration phenomena in situations where the roughness of the metric means that geodesic curves may fail to be unique.  It also led to the development of sharp bounds of the form \eqref{soggebounds} for the Dirichlet and Neumann Laplacians on compact Riemannian manifolds with boundary (see \cite{smithsogge06}).  Indeed, one strategy for proving estimates in this context is to form the double of the manifold, essentially gluing two copies of the manifold along the boundary.  While this eliminates the boundary, it gives rise to a metric of Lipschitz regularity (see e.g. \cite[p.420]{bssmulti}).  Hence any result on manifolds with Lipschitz metrics also applies to manifolds with boundary.  At the same time, the bounds \eqref{restrictionbounds} when $n=2$, $k=1$ (curves in 2 dimensional manifolds) for $\g \in C^{\infty}$ have garnered additional interest in recent works which relate improvements in these estimates to improvements in the inequalities in \eqref{soggebounds} (see \cite{bourgain}, \cite{soggekaknik}, \cite{ariturk}).

On the other hand, one of the notable aspects of the work of Burq-G\'erard-Tzvetkov \cite{bgtrestr} is that they showed an improvement on \eqref{restrictionbounds} when $n=2$ and $P$ is a curve with nonvanishing geodesic curvature.  Specifically, they proved that
\begin{equation}\label{curvedL2bound}
\|\Pi_\l f\|_{L^2(P)} \leq C \l^{\frac 16} \|f\|_{L^2(M)}.
\end{equation}
This was then generalized to all dimensions by Hu \cite{Hu} who obtained the same bound for any codimension 1 submanifold with negative definite scalar second fundamental form (or positive definite, depending on the choice of normal vector).  As before, these bounds also follow from an observation of Tataru \cite{tatarubdry} based on known estimates of H\"ormander \cite[25.3]{hormander4}.  The bound \eqref{curvedL2bound}  can then be interpolated with \eqref{restrictionbounds} when $q=\frac{2n}{n-1}$, $\delta = \frac{n-1}{2n}$ to get that the $\delta$ in \eqref{deltadefhyper} can be improved to
\begin{equation*}
\delta=\frac{n-1}{3}-\frac{2n-3}{3q} \qquad \text{when } 2 \leq q < \frac{2n}{n-1}.
\end{equation*}
These bounds thus speak to the concentration properties of eigenfunctions.  When $P$ is in some sense ``far away" from containing geodesic segments, eigenfunctions have less tendency to concentrate near $P$.  A work of Hassell-Tacy \cite{hasselltacy} proves bounds of this type in a semiclassical setting.

In the present work, we consider the development of the bounds \eqref{restrictionbounds} for $C^{1,\alpha}$ metrics with $0 \leq \alpha \leq 1$, allowing for Lipschitz regularity when $\alpha =0$.  As a corollary, we obtain bounds of this type (with a loss) for the Dirichlet and Neumann Laplacians on compact manifolds with boundary.  Bounds of the form \eqref{restrictionbounds} when $n=2$, $k=1$ for manifolds with concave boundaries are due to Ariturk \cite{ariturk}, provided Dirichlet conditions are imposed.  However, the presence of gliding rays when the manifold possesses a point of convexity within the boundary complicates matters considerably.

\begin{theorem}\label{thm:general}
Suppose $\g \in C^{1,\alpha}$ with $0 \leq \alpha \leq 1$, allowing for Lipschitz regularity when $\alpha =0$.  When $k=n-1$ and $2 \leq q \leq \frac{2n}{n-1}$, we have that for $\delta = \frac{n-1}{4}-\frac{n-2}{2q}$
\begin{equation}\label{lowregrestrsupercrit}
\|\Pi_\l f\|_{L^q(P)} \leq C \l^{\delta(1 + \sigma) } \|f\|_{L^2(M)}, \qquad
\sigma = \frac{1-\alpha}{3+\alpha}.
\end{equation}
Moreover, when $k=n-1$, $\frac{2n}{n-1} \leq q \leq \infty$ or $k \leq n-2$ we suppose that $\delta= \frac{n-1}{2}-\frac kq$ and $\delta + \frac{\sigma}q <1+\alpha$ with $\sigma$ as above. In this case, the following bounds are satisfied,
\begin{equation}\label{lowregrestrbounds}
\|\Pi_\l f\|_{L^q(P)} \leq C \l^{\delta + \frac{\sigma}q } \|f\|_{L^2(M)}
\end{equation}
with $C$ replaced by $C(\log\l)^{\frac 12}$ when $(k,q) = (n-2,2)$.  The admissibility condition on $\delta$, $q$ can be relaxed to $\delta + \frac{\sigma}q \leq 1+\alpha$ when $\alpha =0$ or $\alpha =1$.
\end{theorem}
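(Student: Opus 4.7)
My plan is to combine Smith's wave packet parametrix for $C^{1,\alpha}$ metrics \cite{SmC1alph} with the restriction machinery of Burq-G\'erard-Tzvetkov \cite{bgtrestr} and Hu \cite{Hu}. The first reduction is standard: replace $\Pi_\l$ by a smooth spectral multiplier $\chi(\l - P_\g)$, where $P_\g = \sqrt{-\Delta_\g}$ and $\chi \in \mathcal{S}(\mathbb{R})$ satisfies $\chi(0)=1$ with $\hat\chi$ supported in a small neighborhood of the origin. Fourier inversion expresses this multiplier as a superposition of half-wave propagators $e^{-itP_\g}$ over a short time window, and a $TT^*$ argument then reduces the desired restriction bound to mapping properties of the propagator restricted to $P$.

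\textbf{Parametrix and core estimates.} Following Smith-Tataru, I would smooth the metric $\g$ at a scale comparable to the wave packet width, obtaining $\g_\mu$ whose Hamiltonian flow is smooth at that scale. The regularized operator $P_{\g_\mu}$ admits a wave packet parametrix for $e^{-itP_{\g_\mu}}$, $|t| \ll 1$, whose packets have spatial width $\l^{-(1+\sigma)/2}$ and frequency width $\l^{(1-\sigma)/2}$, each centered along a bicharacteristic of $\g_\mu$. The difference $e^{-itP_\g} - e^{-itP_{\g_\mu}}$ and the parametrix remainder are controlled by energy estimates and Sobolev embedding on $P$, producing the admissibility condition $\delta + \sigma/q \leq 1+\alpha$, with strict inequality needed for $0<\alpha<1$ to handle endpoint losses. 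After inserting the parametrix and summing over packets, the geometry of $P$ enters through case analysis. For $k = n-1$ and $2 \leq q \leq 2n/(n-1)$, a Hu-BGT stationary phase estimate on bicharacteristics of $\g_\mu$ yields $\l^{\delta(1+\sigma)}$. For $k = n-1$ with $q \geq 2n/(n-1)$, or for $k \leq n-2$, an almost-orthogonality sum yields $\l^{\delta + \sigma/q}$ with $\delta = (n-1)/2 - k/q$; the exceptional case $(k,q) = (n-2,2)$ acquires a $(\log\l)^{1/2}$ factor from the marginal sum over packets tangent to $P$.

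\textbf{Main difficulty.} The principal obstacle lies in the codimension-one, subcritical range, where the target bound $\l^{\delta(1+\sigma)}$ is stronger than what a naive Sobolev-type loss of $\l^{\sigma/q}$ applied to the smooth inequality would yield. This forces the coarser packet scale $\l^{-(1+\sigma)/2}$ to be absorbed into a rescaling of the Hu-BGT stationary phase calculation rather than treated as a separate loss factor: one must track how many packets meet a given point of $P$ as a function of both spatial offset and angle of incidence, and show that the curvature gain \eqref{curvedL2bound} on $\g_\mu$ survives the broader packet scale intact. A secondary difficulty is the Lipschitz endpoint $\alpha=0$, where bicharacteristics of $\g$ itself need not exist and the parametrix must be built entirely from $\g_\mu$ while still achieving the relaxed admissibility condition $\delta+\sigma/q \leq 1$.
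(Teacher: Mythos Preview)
Your strategy is in the right neighborhood, but it omits the mechanism that actually produces the two different loss exponents $\delta(1+\sigma)$ versus $\delta+\sigma/q$, and it misidentifies the source of the gain in the codimension-one subcritical range.

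The paper does not run the Hu--BGT stationary-phase argument directly at the coarse packet scale. Instead it \emph{rescales}: one localizes $f_\lambda$ to cubes $Q_R$ of sidelength $R=\lambda^{-\sigma}$, dilates $x\mapsto Rx$, and sets $\mu=R\lambda=\lambda^{1-\sigma}$. In the new coordinates the metric is effectively $C^{1,1}$ at frequency $\mu$, so the lossless ($\sigma=0$) restriction estimate holds there, and this is what the wave-packet/square-function machinery proves. The entire loss then enters through the \emph{recombination step}: summing the cube estimates inside a slab $S_R$ of thickness $R$, and then summing the $\mathcal{O}(R^{-1})$ slabs. The slab sum is controlled not by energy but by Smith's \emph{flux estimate} $\|f_\lambda\|_{L^\infty_{x_1}L^2_{x'}}\lesssim |||f_\lambda|||$, which gives $\|f_\lambda\|_{L^2(S_R)}\lesssim R^{1/2}|||f_\lambda|||$. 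Tracking exponents through this summation is precisely what distinguishes the two regimes: with $\rho=\tfrac{n-1}{2}-\tfrac{k}{q}$, one has $\rho-\delta(0)=\delta(0)-\tfrac{1}{q}$ when $k=n-1$ and $2\le q\le \tfrac{2n}{n-1}$, versus $\rho=\delta(0)$ otherwise; the first identity yields $\delta(1+\sigma)$ and the second yields $\delta+\sigma/q$. Your proposal never invokes the flux estimate, and ``energy estimates and Sobolev embedding'' alone would cost an extra factor of $\lambda^{\sigma/q}$ from the $\ell^q$ sum over $\lambda^{\sigma}$ pieces, which is exactly the overshoot you flag as the main difficulty.

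Separately, your diagnosis of that difficulty is off: the bound \eqref{curvedL2bound} you cite is the improvement for \emph{curved} hypersurfaces (Theorem~\ref{thm:curved}), and plays no role in Theorem~\ref{thm:general}. For a general hypersurface in the subcritical range the relevant ``curvature'' is that of the cosphere $\{|\xi|_{\g}=1\}$, which enters through the dispersive bound $\|W_tW_s^*\|_{L^2_y\to L^2_y}\lesssim \mu(1+\mu|t-s|)^{-1/2}$ and Hardy--Littlewood--Sobolev, not through any geometry of $P$. So the issue is not whether a curvature gain ``survives the broader packet scale''; it is that you need the flux-based slab decomposition to avoid paying for the packet scale twice.
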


Furthermore, we will show improvements akin to \eqref{curvedL2bound} when $0< \alpha \leq 1$.  For these metrics the Christoffel symbols are well defined and continuous on $M$ by the usual coordinate formula
$$
\Gamma^k_{ij} = \frac 12 \g^{kl}\left(\prtl_i \g_{jl} + \prtl_j \g_{il} - \prtl_l \g_{ij}\right)
$$
(with the summation convention in effect).  Hence there is also a well defined Levi-Civita connection associated to the metric $\g$ on $M$, mapping $C^1$ vector fields to continuous vector fields with the usual properties.  In particular, given a smooth, embedded, codimension 1 submanifold of $P$, the scalar second fundamental form is well defined and if it is negative definite throughout $P$ for a suitable choice of normal vector field, we shall call it ``curved".  We will see that in this case, the power of $\l$ in \eqref{lowregrestrsupercrit} with $q=2$ can be improved to $\frac 16 + \frac{\sigma}{2}$ (which can be seen as strictly less than $\frac 14(1+\sigma)$ when $\sigma < \frac 13$).

\begin{theorem}\label{thm:curved}
Suppose $\g \in C^{1,\alpha}$ with $0 < \alpha \leq 1$, and that $P$ is a ``curved" codimension 1 submanifold as defined above.  Then the following bounds are satisfied
\begin{equation}\label{lowregcurvedbounds}
\|\Pi_\l f\|_{L^2(P)} \leq C \l^{\frac 16+\frac{\sigma}{2}} \|f\|_{L^2(M)}, \qquad
\sigma = \frac{1-\alpha}{3+\alpha}.
\end{equation}
Moreover, interpolating this bound with the $q=\frac{2n}{n-1}$ case of \eqref{lowregrestrsupercrit} yields an improvement of that estimate for $2 \leq q <\frac{2n}{n-1}$.
\end{theorem}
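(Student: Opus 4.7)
The plan is to establish \eqref{lowregcurvedbounds} via a $TT^*$ argument paired with the low-regularity parametrix machinery developed by Smith in \cite{SmC1alph}. Letting $T \colon L^2(M) \to L^2(P)$ denote the operator $Tf = (\Pi_\l f)|_P$, \eqref{lowregcurvedbounds} is equivalent to
$$
\|T T^* g\|_{L^2(P)} \leq C\, \l^{\frac 13 + \sigma} \|g\|_{L^2(P)}, \qquad g \in L^2(P).
$$
Choosing $\chi \in C_c^\infty(\RR)$ with $\hat\chi$ supported in $[-T_0,T_0]$ for a small fixed $T_0>0$ and $\chi \geq 1$ on $[0,1]$, one can dominate $\Pi_\l$ by $\chi(\sqrt{-\Delta_\g} - \l)$ and represent the kernel of the associated $TT^*$ operator through the half-wave propagator,
$$
K(x,y) = \frac{1}{2\pi}\int \hat\chi(t)\, e^{it\l}\, \bigl(e^{-it\sqrt{-\Delta_\g}}\bigr)(x,y)\, dt, \qquad x,y \in P.
$$

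To deal with low regularity, I would regularize $\g$ at the natural Smith--Tataru scale $\l^{-2/(3+\alpha)}$ implicit in \eqref{lowregexample}, obtaining a smoothed metric $\g_\l$ whose half-wave group admits a classical Hadamard-type parametrix
$$
\bigl(e^{-it\sqrt{-\Delta_{\g_\l}}}\bigr)(x,y) \approx \int e^{i\varphi(t,x,y,\xi)}\, a_\l(t,x,y,\xi)\, d\xi
$$
valid for $|t| \lesssim T_0 \l^{-\sigma}$. Here $\varphi$ is homogeneous of degree one in $\xi$ and parametrizes the bicharacteristic flow of $\g_\l$, while the symbol $a_\l$ is permitted controlled growth in its seminorms reflecting the smoothing scale. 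The difference between the propagators for $\g$ and $\g_\l$ will be treated as a perturbation using $\g \in C^{1,\alpha}$ with $\alpha>0$ together with the regularity of the Christoffel symbols noted before the statement.

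Substituting the parametrix into the expression for $K$ and applying stationary phase in $t$ reduces the task to estimating an oscillatory integral operator on $P$. The hypothesis that the scalar second fundamental form of $P$ is negative definite — which, thanks to $\alpha>0$ and continuity of the Levi--Civita connection under mollification, is inherited by $\g_\l$ for $\l$ large — guarantees that the canonical relation of the half-wave propagator projects to $P \times P$ with at worst Whitney fold singularities. One is thus in the framework of H\"ormander's oscillatory integral estimates \cite[\S25.3]{hormander4} (the mechanism invoked by Tataru \cite{tatarubdry} and, in a different guise, by Burq--G\'erard--Tzvetkov and Hu), which supply the bound $\l^{1/3}$ on $\|TT^*\|_{L^2(P) \to L^2(P)}$ and reproduce the $\l^{1/6}$ gain of \eqref{curvedL2bound} in the case of $\g_\l$.

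The principal obstacle is quantifying the cost of replacing $\g$ by $\g_\l$ and of the short range of validity of the parametrix. Since the parametrix only remains accurate on time windows of length $T_0 \l^{-\sigma}$, the short-time estimate must be iterated on the order of $\l^\sigma$ times to cover the $O(1)$ interval used to prove \eqref{curvedL2bound}. An $L^2$ almost-orthogonality argument applied to the nearly disjoint wave packet decompositions in successive iterations converts this multiplicative iteration loss into a square-root factor $\l^{\sigma/2}$, producing the exponent $\frac 16 + \frac\sigma 2$ asserted in \eqref{lowregcurvedbounds}. The final sentence of the theorem follows by Riesz--Thorin interpolation between \eqref{lowregcurvedbounds} and the $q = \frac{2n}{n-1}$ endpoint of \eqref{lowregrestrsupercrit}, which is a routine step once both endpoints are established.
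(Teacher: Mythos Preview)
Your high-level scaffold---regularize the metric, prove a sharp short-time estimate, then sum over $O(\l^\sigma)$ time windows at the cost of $\l^{\sigma/2}$---matches the paper's architecture. But two of the three load-bearing steps are not substantiated, and the paper's actual proof shows why they require substantial work.

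First, the appeal to a ``classical Hadamard-type parametrix'' followed by H\"ormander's fold estimates \cite[\S25.3]{hormander4} is exactly what does \emph{not} go through here. Even after mollification, the higher derivatives of $\g_\l$ grow with $\l$, so the phase and amplitude of any oscillatory-integral parametrix carry $\l$-dependent seminorms, and the constants in the fold estimates blow up. The paper (following \cite{SmC2}) explicitly abandons FIO methods for this reason and instead conjugates the evolution by Smith's wave packet transform $T_\mu$, reducing matters to transport along the Hamiltonian flow, which requires only the $C^2$ bounds \eqref{c0}. After the rescaling $x\mapsto \l^{-\sigma}x$, the second fundamental form of $P$ becomes \emph{small}, of order $\mu^{-\beta}$ with $\beta=\sigma/(1-\sigma)$ (see \eqref{negdefgmu}), so the fold is degenerate and one must prove the nonstandard bound $\|W\tilde f\|_{L^2}\lesssim \mu^{(1+\beta)/6}\|\tilde f\|_{L^2}$ rather than $\mu^{1/6}$. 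This is done via a dyadic decomposition in the conormal momentum $a(x,\xi)=\g^{nm}\xi_m$ over scales $\mu^{-(1+\beta)/3}\lesssim 2^{-j}\lesssim \mu^{-\beta}$, the curvature entering through the drift estimate \eqref{parabxi} for $v^n(t)$ along bicharacteristics, combined with the kernel bounds of Theorem~\ref{thm:wjbounds} and a Cotlar's lemma argument. None of this structure is captured by invoking off-the-shelf fold estimates.

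Second, your mechanism for the $\l^{\sigma/2}$ loss---``almost-orthogonality applied to nearly disjoint wave packet decompositions in successive iterations''---is not a recognizable argument. The paper obtains the square-root loss concretely: it proves the no-loss estimate on cubes $Q_R$ of side $R=\l^{-\sigma}$ (equation \eqref{qrestimate}), sums these into slabs $S_R$ of width $R$ in the hyperbolic direction $x_1$, applies the flux estimate $\|f_\l\|_{L^2(S_R)}\lesssim R^{1/2}|||f_\l|||$ of \eqref{fluxR}, and then takes the $\ell^2$ sum over the $O(\l^\sigma)$ slabs. The flux estimate is what converts iteration into a square root; your proposal does not identify a substitute for it.
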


Following \cite{SmC1alph}, we will show that for each theorem, the $0 \leq \alpha <1$ case follows from the  $\alpha =1$ case by rescaling methods.  This involves dilating coordinates so that sets of diameter $\approx\lambda^{-\sigma}$ in $P$ have diameter $\approx 1$ in the new coordinates.  Since the metric can be approximated by one with $C^{1,1}$ regularity here, the bounds from the $\alpha =1$ case can then be applied.  In the original coordinates, this then implies that the estimates \eqref{lowregrestrsupercrit}, \eqref{lowregrestrbounds}, \eqref{lowregcurvedbounds} hold with $\sigma = 0$ over sets of diameter $\approx\lambda^{-\sigma}$.  By incorporating the flux estimates in \cite{SmC1alph}, it can then be seen that Theorems \ref{thm:general} and \ref{thm:curved} follow by taking a sum over all such sets.

The bounds \eqref{soggebounds} for $C^{1,1}$ metrics in \cite{SmC2} (and those for manifolds with boundary in \cite{smithsogge06}) were proved by wave equation methods.  Specifically, square function estimates are developed for solutions to the wave equation on these manifolds, bounding the $L^q(M)$ norm of the square function
$$
x \mapsto \left( \int_{0}^1 |u(t,x)|^2\,dt \right)^{\frac 12}, \qquad \text{where } (\prtl^2_t -\Delta_\g)u=0.
$$
As will be seen below, spectral clusters above naturally give rise to solutions to the wave equation, these estimates imply bounds on the $\Pi_\l f$.  Square function estimates were first proved in \cite{mss93} for smooth metrics, using that Fourier integral operators can be used to invert the equation.  However, when $\g \in C^{1,1}$, the roughness of the metric means that these methods are inapplicable, so a crucial development in \cite{SmC2} is the construction of a suitable parametrix using wave packet methods.  The resulting approximate solution operators can be thought of as generalized Fourier integral operators where the associated canonical relation satisfies the curvature condition in \cite{mss93}.

We follow the same strategy here, essentially proving bounds on the $L^q(P)$ norm of the square function above.  Once again, the roughness of the metric means that we are led to use wave packet methods to construct a parametrix.  In this case, the canonical relations which arise naturally have folding singularities.  In Theorem \ref{thm:general}, the relation has a one-sided fold and in Theorem \ref{thm:curved} the relation essentially has a two-sided fold.  There is a significant body of work on $L^2 \to L^q$ bounds for Fourier integral operators with folding singularities, see \cite{greenleafseeger}, \cite{hormander4}, \cite{meltaykirchoff}, \cite{pansogge}, \cite{cuccagna} (the first one treating one sided folds).  A key technical development in the present work is that the operators arising from the wave packet transform satisfy the desired square function estimates in spite of the inapplicability of these results for Fourier integral operators.  Nonetheless, the approach taken here is in part inspired by these works.

\subsection*{Notation} We use $C^\alpha$ to H\"older class of order $\alpha$.  Moreover $C^{1,\alpha}$ will denote the class of metrics or functions whose first derivative is in $C^\alpha$, taking the contrived convention that Lipschitz regularity is allowable when $\alpha=0$.  In what follows, $X \lesssim Y$ will denote that $X \leq CY$ for some implicit constant $C$ which is in some sense uniform, though when used in decay estimates, it may depend on the order $N$.  Similarly, $X \approx Y$ will denote that $X \lesssim Y$ and $Y \lesssim X$.  We use $d$ as the differential which carries scalar functions to covector fields and vectors into matrices in the natural way.  Given a metric $\g$ under discussion, we let $\langle\cdot,\cdot\rangle_\g$, $|\cdot|_\g$ denote the inner product and length induced by the metric either in the tangent or cotangent space. Lastly, given a vector $x \in \RR^n$, $x'$ and $x''$ will typically denote a vector in $\RR^l$, $l <n$, formed by taking a subcollection of the components of $x$.  The nature of this subcollection may vary depending on the section.

\subsection*{Remark on admissibility conditions}  The admissibility condition $\delta + \frac{\sigma}q <1+\alpha$ (with equality allowed when $\alpha = 0,1$) arises in \S\ref{sec:reductions} where elliptic regularity is used to show that when a cluster $\Pi_\l f$ is considered in a coordinate system, the high frequency components (with respect to the Fourier transform) satisfy better bounds than those near frequency $\l$.  However, it can be checked that the condition $\delta < \frac 12$ is always satisfied when $k=n-1$ and $2 \leq q \leq \frac{2n}{n-1}$ and that $\delta < \frac 56$ holds for sufficiently small $q>2$ when $k=2$ ensuring that in many relevant cases, the admissibility condition is satisfied.  On the other hand, Smith \cite[p. 969]{SmC1alph} showed that the bound $\|\Pi_\l f\|_{L^\infty(M)} \lesssim \l^{\frac{n-1}{2}} \|f\|_{L^2(M)}$ holds whenever $\g$ is Lipschitz.  The key observation here is that one can write $\Pi_\l f = \exp(-\l^{-2}\Delta_\g) \Pi_\l \tilde{f}$ with $\|\Pi_\l \tilde{f}\|_{L^2(M)} \approx \|\Pi_\l f\|_{L^2(M)}$.  The $L^\infty(M)$ bounds then follow by combining Saloff-Coste's \cite{saloffcoste} Gaussian upper bounds on the heat kernel with Smith's $L^{\frac{2(n+1)}{n-1}}(M)$ bounds on $\Pi_\l f$.  However, the same argument gives the continuity of each $\Pi_\l f \in L^2(M)$ since the fixed time heat kernel is continuous on $M\times M$ (as observed in \cite[\S6]{saloffcoste}).  Thus Smith's $L^\infty$ bounds on spectral clusters imply $L^\infty$ bounds on their restrictions and this can be interpolated with the $L^q(P)$ bounds for submanifolds of low codimension to see that in many cases, the admissibility conditions can be relaxed.  This also ensures that the restrictions are well-defined.

\subsection*{Remark on the optimality of \eqref{lowregrestrsupercrit}}
As noted above in \eqref{lowregexample}, the examples in \cite{sS94}, \cite{smithtatexample} show that the bounds of Smith \cite{SmC1alph} establishing $L^q(M)$ bounds are sharp for small values of $q>2$.  We comment here that the same examples show that the bounds \eqref{lowregrestrsupercrit} in Theorem \ref{thm:general} are sharp as well.  Indeed, the examples in \cite{sS94} produce metrics of $C^{1,\alpha}$ regularity and associated spectral clusters $f_\l$ which are concentrated in a tube of length 1 and diameter $\l^{-\frac{2}{3+\alpha}}$, that is, a set of the form
\begin{equation}\label{alphatube}
|x_1| \lesssim 1 \qquad |(x_2,\dots, x_n)| \lesssim \l^{-\frac{2}{3+\alpha}}.
\end{equation}
Therefore if we take $P$ to be defined by $x_n=0$, we see that the rapid decay outside of this set implies that
$$
\frac{\|f_\l\|_{L^q(P)}}{\|f_\l\|_{L^2(M)}} \approx \l^{\frac{2}{3+\alpha}(\frac{n-1}{2}-\frac{n-2}{q})}.
$$
However, $\frac 12(\sigma+1) = \frac{2}{3+\alpha}$, showing that the exponent simplifies to $\delta(1+\sigma)$ and hence the bound \eqref{lowregrestrsupercrit} is optimal.

\subsection*{Acknowledgements}
The author wishes to thank the anonymous referee for pointing out a significant error in an earlier draft of the paper and other helpful suggestions.

\section{Microlocal reductions}\label{sec:reductions}
In this section, we will reduce the main theorems to proving square function estimates for frequency localized solutions to a hyperbolic pseudodifferential equation.  We follow an approach due to H. Smith \cite{SmC1alph} (see also \cite{bssmulti}).  The needed reductions are fairly common to both theorems, so we begin by treating all cases at the same time.  It is thus convenient to take the convention that $\delta(\sigma)$ is defined by taking the power of $\l$ appearing in \eqref{lowregrestrsupercrit}, \eqref{lowregrestrbounds}, or \eqref{lowregcurvedbounds}, realizing that in all cases $\delta(0)$ denotes the power without loss of derivatives.  Moreover, the admissibility conditions mean that if $\sigma >0$ and $0 < \alpha < 1$, $\delta(\sigma)-1 <\alpha$ (respectively $\delta(\sigma)-1\leq \alpha$ when $\alpha =0,1$).

Throughout these preliminary reductions, we will make use of the fact that when $k < n$, we have the following embedding for traces in $\RR^k \times \{0\}$, $\{0\} \in \RR^{n-k}$
\begin{equation}\label{sobtrace}
H^{\frac n2 - \frac kq}(\RR^n) \hookrightarrow L^q(\RR^k \times \{0\}),
\end{equation}
which can be seen by first applying Sobolev embedding on $\RR^k \times \{0\}$, then using the trace theorem for $L^2$ based Sobolev spaces.  The estimates in Theorems \ref{thm:general} and \ref{thm:curved} thus exhibit a gain relative to Sobolev embedding.  The gain is largest when $q=2$, in which case there is a gain of a 1/4 or 1/3 of a derivative when $k=n-1$ depending on whether the submanifold is curved and a gain of 1/2 a derivative (up to a possible logarthmic correction) when $k \leq n-2$.

It suffices to prove the main theorem for a spectral cluster $f$ satisfying $f = \Pi_\l f.$
We begin by observing that $f$ satisfies the following bounds in Sobolev spaces defined by the spectral resolution of $\Delta_\g$
\begin{equation*}
\|(\Delta_\g + \l^2) f\|_{H^s(M)} + \|d f\|_{H^s(M)} \lesssim \l^{s+1} \|f\|_{L^2(M)}.
\end{equation*}
It thus suffices to prove bounds on $f$ of the following form
\begin{equation}\label{mfldbound}
\|f\|_{L^q(P)} \lesssim \sum_i \l^{\delta(\sigma)-1-s_i}\left(\l\|f\|_{H^{s_i}(M)} + \| d f \|_{H^{s_i}(M)} +
\|(\Delta_\g + \l^2) f\|_{H^{s_i}(M)}\right)
\end{equation}
where a sum is taken over a finite collection of $0\leq s_i \leq 1$.

Multiplication by any smooth bump function $\psi$ preserves $H^1(M)$, and by interpolation $H^s(M)$ for any $s \in [0,1]$.  Therefore by taking a partition of unity on $M$, it suffices to prove \eqref{mfldbound} with $f$ replaced by $\psi f$, where $\psi$ is supported in a suitable coordinate chart which intersects $P$.  Specifically, we will take slice coordinates so that $P$ is identified with $\RR^k \times \{0\}$.  Furthermore, by taking a sufficiently fine partition of unity and dilating coordinates, we may assume that for some $c_0$ sufficiently small,
\begin{equation}\label{c0first}
\|\g^{ij}-\delta_{ij}\|_{C^{1,\alpha}(\RR^n)} \leq c_0.
\end{equation}

By elliptic regularity (see e.g. Gilbarg-Trudinger \cite[Theorem 8.10, Theorem 9.11]{GT}) and interpolation we have that for any $g$ supported in this coordinate chart
$
\|g\|_{H^s(M)} \approx \|g\|_{H^s(\RR^n)}$ for $s \in [0,2].
$
Next we observe that in coordinates within $\supp(\psi)$, $f$ satisfies an equation of the form
\begin{equation}\label{quasimodeRn}
\g \dsq f + \l^2 f = w, \qquad \g \dsq f = \sum_{1 \leq i,j \leq n} \g^{ij}\prtl^2_{ij} f
\end{equation}
where $w$ is a sum consisting of $(\Delta_\g +\l^2)f$ and products of the form $a \cdot \prtl_j f$, with $a \in C^\alpha$ (or $L^\infty$, $C^{0,1}$ when $\alpha =0,1$ respectively) in turn a product of functions of the form $\g^{ij}$, $\sqrt{\det{\g_{ij}}}$ or their first derivatives.  Hence multiplication by these functions preserves $H^s(\RR^n)$ for $s=0$ and $s \in [0,\alpha)$ when $\alpha >0$ (respectively $s \in [0,1]$ when $\alpha =1$) meaning that that for any such $s$
$$
\|w\|_{H^{s}(\RR^n)} \lesssim \|f\|_{H^{s}(M)}+\|d f\|_{H^{s}(M)}+ \|(\Delta_\g + \l^2) f\|_{H^{s}(M)}.
$$

Furthermore, elliptic regularity (see e.g. \cite[Theorem 9.11]{GT}) also gives that
\begin{equation}\label{ellipticfirst}
\|\dsq f\|_{L^2(\RR^n)} \lesssim \|\Delta_\g f\|_{L^2(\RR^n)}+\|df\|_{L^2(\RR^n)} + \|f\|_{L^2(\RR^n)}.
\end{equation}
Moreover, when $\delta(\sigma) > 1$ (which only occurs when $\alpha >0$), we have that
\begin{equation}\label{commuteDs}
\|[\g^{ij},\langle D\rangle^{\delta(\sigma) -1}]\prtl^2_{ij} f \|_{L^2(\RR^n)} + \|[\prtl_i \g^{ij} ,\langle D\rangle^{\delta(\sigma) -1}] \prtl_j f\|_{L^2(\RR^n)} \lesssim \|df\|_{H^{{\delta(\sigma) -1}}(\RR^n)},
\end{equation}
where $\langle D \rangle$ denotes the Fourier multiplier with symbol $(1+|\xi|^2)^{\frac 12}$.  This means that we may replace $L^2$ by $H^{\delta(\sigma)-1}$ in \eqref{ellipticfirst}.  Indeed, the bound on the first term in \eqref{commuteDs} follows as a consequence of the Coifman-Meyer commutator theorem (see e.g. \cite[Proposition 3.6B]{tay91}) and the second follows since the admissibility condition on $\delta(\sigma)$ implies that multiplication by $\prtl_i \g^{ij}$ preserves $H^{\delta(\sigma)-1}(\RR^n)$.

With this in mind, we define the following norm when $\delta(\sigma) \leq 1$
$$
|||f||| := \|f\|_{L^2(\RR^n)} + \l^{-1}\| df \|_{L^2(\RR^n)} + \l^{-2}\| \dsq f \|_{L^2(\RR^n)} +
\l^{-1}\|w\|_{L^2(\RR^n)}.
$$
When $\delta(\sigma) >1$, we define
\begin{multline*}
|||f||| := \sum_{j=0 }^2\l^{-j}\|d^j\! f\|_{L^2(\RR^n)} + \l^{-1}\|w\|_{L^2(\RR^n)}\\
+ \l^{-(\delta(\sigma)-1)}\left(\sum_{j=0 }^2\l^{-j}\|d^j\! f\|_{H^{\delta(\sigma)-1}(\RR^n)} +\l^{-1}\|w\|_{H^{\delta(\sigma)-1}(\RR^n)}\right).
\end{multline*}
Given the observations above, it now suffices to show that
\begin{equation}\label{tripnormbound}
\|f\|_{L^q(\RR^k \times \{0\})} \lesssim \l^{\delta(\sigma)}|||f|||.
\end{equation}

Without loss of generality, we may assume that $f$ is supported in a cube of sidelength 1 centered at the origin and that the metric is defined over a cube of sidelength 8 centered at the origin.  Hence we may smoothly extend the metric $\g$ so that it is defined over all of $\RR^n$ and equal to the flat metric for $|x|$ sufficiently large without altering the equation for $f$. Given $r>0$, we let $S_r = S_r(D)$ denote a Fourier multiplier which applies a smooth cutoff to frequencies $|\xi| \leq r$ and define $\g_\l = S_{c^2 \l}\g$ where $c>0$ will be taken to be sufficiently small.  Since
\begin{equation}\label{lambdaapprox}
\|\g_\l -\g\|_{L^\infty} \lesssim \l^{-1}
\end{equation}
we may replace $\g$ by $\g_\l$ in \eqref{quasimodeRn} when $\delta(\sigma)\leq 1$ as the error can be absorbed into the right hand side of \eqref{tripnormbound}.  The same holds when $1<\delta(\sigma)$ is admissible, which can be seen by using the similar bound $\|\g_\l -\g\|_{C^\alpha} \lesssim \l^{-1}$ and the fact that multiplication by a $C^\alpha$ function preserves $H^{\delta(\sigma)-1}(\RR^n)$.

We now write $f$ as $f= f_{<\l} + f_{\l}+f_{>\l}$ where $f_{<\l} = S_{c\l}f$ and $f_{>\l} = f-S_{c^{-1}\l}f$.  Observe that when $s =0 $
\begin{equation}\label{glcomm}
\|[S_{c\l},\g_\l]\|_{H^s \to H^s} +\|[S_{c^{-1}\l},\g_\l]\|_{H^s \to H^s}\lesssim \l^{-1}
\end{equation}
which follows from simple bounds on the kernel of the commutators.  When $1< \delta(\sigma)$ is admissible, the same holds with $s=\delta(\sigma)-1$.   Indeed, we have that $\l S_{c\l}$ (and similarly $\l S_{c^{-1}\l}$) defines an operator in $S^1_{1,0}$ hence the symbolic calculus gives $[\l S_{c\l},\g_\l] \in C^\alpha S^0_{1,0}$ (in the notation of \cite{tay91}).  The claim then follows by \cite[Proposition 2.1D]{tay91} or by commuting with derivatives when $\alpha =1$.  Defining $ w_{<\l}:= \g_\l \dsq f_{<\l} + \l^2 f_{<\l}$, $w_{>\l}:= \g_\l \dsq f_{>\l} + \l^2 f_{>\l}$
we have
\begin{equation}\label{lowfreqsource}
\|w_{<\l}\|_{H^s(\RR^n)} + \|w_{>\l}\|_{H^s(\RR^n)} \lesssim \l^{-1}\|\dsq f \|_{H^s(\RR^n)} + \|w\|_{H^s(\RR^n)}
\end{equation}
for $s=0$ and for $s = \delta(\sigma)-1$ when the latter quantity is positive.

To bound $f_{<\l}$, $f_{>\l}$ we use arguments from \cite[Corollary 5]{SmC1alph}.  Since $\|\g_\l \dsq f_{<\l}\|_{L^2} \lesssim (c\l)^2 \|f_{<\l}\|_{L^2} $, \eqref{sobtrace} and the equation give the stronger estimate
$$
\|f_{<\l}\|_{L^q(\RR^k \times \{0\})} \lesssim \l^{\frac n2 -\frac kq} \|f_{<\l}\|_{L^2(\RR^n)} \lesssim \l^{\frac n2 -\frac kq -2} \|w_{<\l}\|_{L^2(\RR^n)} \lesssim \l^{\frac n2 -\frac kq -1} |||f|||.
$$
For the high frequency term $f_{>\l}$, we use that when $s \geq 0$,
$$
\l^2\|f_{>\l}\|_{H^s(\RR^n)} + \l\|df_{>\l}\|_{H^s(\RR^n)} \lesssim c\|\dsq f_{>\l}\|_{H^s(\RR^n)}.
$$
This bound with $s=0$ can be combined with elliptic regularity to obtain
\begin{equation}\label{elliptichifreq}
\|\dsq f_{>\l}\|_{L^2(\RR^n)} \lesssim \|w_{>\l}\|_{L^2(\RR^n)}.
\end{equation}
When $\frac n2 -\frac kq \leq 2$, \eqref{sobtrace} yields a gain of at least 1/2 of a derivative in the estimate for $f_{>\l}$.  The case $\frac n2 -\frac kq >2$ only arises when $\alpha >0$ and $\delta(\sigma) = \frac{n-1}{2}-\frac kq+\frac{\sigma}{q}$, and in this case we use \eqref{commuteDs} (with $\g_\l$ replacing $\g$) to  bootstrap the elliptic regularity estimate, which yields a similar gain for $f_{>\l}$ since
$$
\| f_{>\l}\|_{H^{\delta(\sigma)+1}(\RR^n)} \lesssim \|w_{>\l}\|_{H^{\delta(\sigma)-1}(\RR^n)} \lesssim |||f|||.
$$

We are now reduced to proving bounds on $f_{\l}$.  Reasoning as in \eqref{glcomm}, we have that $|||f_\l||| \lesssim |||f|||$.  We now impose a further microlocal decomposition of the function, writing $f_\l = f_{\l,T}+f_{\l,N}$, where $\widehat{f}_{\l,T}$ is localized to directions tangent to the submanifold and $\widehat{f}_{\l,N}$ is localized to normal directions.  Specifically, we write $f_{\l,N} = \sum_{j=k + 1}^n f_{\l,j}$ where $f_{\l,j}$ is frequency localized to a set of the form
\begin{equation*}
\supp(\widehat{ f_{\l,j} }) \subset \{\xi:\l \approx |\xi|, \,|\xi_j| \gtrsim \veps |(\xi_1,\dots,\xi_j,\xi_{j+1}, \dots, \xi_n)| \},
\end{equation*}
with $\veps$ suitably small.  Using \eqref{glcomm} again, we have that
\begin{equation}\label{gammacomm}
\|\g_\l \dsq f_{\l,j} + \l^2 f_{\l,j}\|_{L^2(\RR^n)} \lesssim |||f_\l||| .
\end{equation}
With this in mind, the flux estimates of Smith \cite[p.974]{SmC1alph}, give
\begin{equation}\label{flux}
\|f_{\l,j}\|_{L^\infty_{x_j}L^2_{x'}} \lesssim |||f_\l|||
\end{equation}
where $x'$ denotes the vector consisting of every component in $\RR^n$ but $x_j$.  Combining this with the $n-1$ dimensional version of \eqref{sobtrace} on the hyperplane $x_j=0$, we have
$$
\|f_{\l,j}\|_{L^q(\RR^k \times \{0\}) } \lesssim \l^{\frac{n-1}{2}-\frac kq}\|f_{\l,j}\|_{L^2{(x_j=0)}} \lesssim \l^{\delta(\sigma)}|||f_\l|||.
$$

We now further decompose $f_{\l,T}$ as $f_{\l,T} = \sum_{j} f_{\l,\omega_j}$ where $\{ \omega_j\}$ is a finite collection of unit vectors and $\supp(\widehat{ f_{\l,\omega_j} }) $ lies in a small conic set containing $\omega_j$.  Without loss of generality, it suffices to treat the case $\omega_j = -e_1 = (-1,0,\dots,0)$.  Recalling \eqref{gammacomm} and simplifying notation it now suffices to prove $\|f_\l\|_{L^q(\RR^k \times \{0\})} \lesssim \l^{\delta(\sigma)} |||f_\l|||$ for $f_\l$ satisfying
\begin{equation}\label{conelast}
\supp(\widehat{ f_{\l} }) \subset \{\xi:\left|\xi/|\xi| - (-e_1) \right| \lesssim \veps\}.
\end{equation}

As a consequence of \eqref{flux} with $x_j = x_1$ and H\"older's inequality, we have that if $S_R$ is a slab of the form $S_R = \{x: |x_1-r| \leq R \}$ for some $r$
\begin{equation}\label{fluxR}
\|f_{\l}\|_{L^2(S_R)} \lesssim R^{\frac 12}|||f_\l|||.
\end{equation}
Set $\rho = \frac{n-1}{2}-\frac kq$ so that $\rho-\delta(0) = \delta(0) - \frac 1q$ when $k=n-1$, $2 \leq q \leq \frac{2n}{n-1}$ and  $\rho=\delta(0)$ in all other cases of Theorem \ref{thm:general}.  Given a cube $Q_R$ of sidelength $R = \l^{-\sigma}$ which intersects $\RR^k \times \{0\}$, we let $Q_R^*$ denote its double, and also set $w_\l := \g_\l \dsq f_\l + \l^2 f_\l$.  We claim that Theorem \ref{thm:general} now follows from the bound
\begin{equation}\label{qrestimate}
\|f_\l\|_{L^q\left((\RR^k \times \{0\}) \cap Q_R\right)}  \lesssim \l^{(1-\sigma)\delta(0)} R^{-\rho}\left( R^{-\frac 12} \|f_{\l}\|_{L^2(Q_R^*)} + R^{\frac 12} \l^{-1}\|w_{\l}\|_{L^2(Q_R^*)} \right).
\end{equation}
Moreover, Theorem \ref{thm:curved} will follow from taking $q=2$, $\delta(0) = 1/6$ here when $P$ is curved (as $\rho=0$ in this case).  Indeed, if these bounds hold, we may sum over the cubes $Q_R$ contained in $S_R$ which intersect $\RR^k \times \{0\}$ to obtain
$$
\|f_\l\|_{L^q\left((\RR^k \times \{0\}) \cap S_R\right)}  \lesssim \l^{(1-\sigma)\delta(0)+\sigma \rho} \left( R^{-\frac 12} \|f_{\l}\|_{L^2(S_R^*)} + R^{\frac 12} \l^{-1}\|w_{\l}\|_{L^2(S_R^*)} \right) .
$$
Recalling \eqref{fluxR}, the right hand side is bounded by $\l^{(1-\sigma)\delta(0)+\sigma \rho} |||f_\l|||$.  Given the previous observations on $\rho$, the desired bound on $f_\l$ then follows by taking a sum over the $\mathcal{O}(R^{-1})$ slabs $S_R$ in $|x_1| \leq 3/4$ and the rapid decay property
\begin{equation}\label{rapiddkf}
|f_\l(x)| \lesssim (\l |x|)^{-N} \|f_\l \|_{L^2(\RR^n)}\qquad \text{for } \max_{j}|x_j| \geq 3/4.
\end{equation}
The latter is a consequence of our assumption that $f$ is supported in a cube of sidelength 1 at the origin, which implies that $f_\l$ is concentrated in a $\l^{-1}$ neighborhood of this cube.

At this stage, we pause to remark on a useful feature of our metric when $P$ is curved.  Let $N$ be a suitable unit normal vector field such that $\langle N, \prtl_n\rangle >0$. Observe that given any $n-1$ vector $(X^1, \dots, X^{n-1})$ such that $(X^1)^2 + \cdots + (X^{n-1})^2=1$, we may assume that over $P$, the quantity
\begin{equation}\label{negdefuniform}
-\sum_{1 \leq i,j \leq n-1}\langle N, \nabla_{\prtl_i}\prtl_j\rangle_{\g} X^i X^j
\end{equation}
is uniformly bounded from above and below.  Indeed, since $\prtl_1, \dots, \prtl_{n-1}$ span the tangent space to $P$ one just applies the assumption that $P$ is curved to constant vector fields of the form $X^j\prtl_j$ (with summation convention in effect). Using that $\nabla_{\prtl_i}\prtl_j$ is the vector field $\Gamma^k_{ij}\prtl_k$, we may use that $\langle N, \prtl_k\rangle_{\g}\equiv 0$ on $P$ for $k \neq n$ and that $\langle N, \prtl_n\rangle_{\g}$ is bounded above to get that
\begin{equation}\label{negdefglambda}
-\sum_{1 \leq i,j \leq n-1} \Gamma^n_{ij}X^iX^j
\end{equation}
is uniformly bounded from above and below over $P$ for all such $(X^1, \dots, X^{n-1})$.  Using that $\|\g - \g_{\l}\|_{C^1} \lesssim \l^{-\alpha}$, the bounds also hold when the Christoffel symbols are taken with respect to $\g_\l$.

Returning to the proof of \eqref{qrestimate}, we dilate variables $x \mapsto Rx$, set $\mu := R\l$, and make the slight abuse of notation that $f_\mu(x) = f_\l(Rx)$.  We will see that this reduces the general bounds to those without a loss of derivatives, and hence we will take $\delta= \delta(0)$ below.  Indeed, rescaling the bound \eqref{qrestimate} gives
\begin{equation}\label{Qestimate}
\|f_\mu\|_{L^q\left((\RR^k \times \{0\}) \cap Q\right)}  \lesssim \mu^\delta \left( \|f_\mu\|_{L^2(Q^*)} + \mu^{-1}\|\g_\mu \dsq f_\mu + \mu^2 f_\mu \|_{L^2(Q^*)} \right).
\end{equation}
When $P$ is curved, rescaling yields the same with $q=2$ and $\delta = (1+\beta)/6$ where $\beta = \sigma/(1-\sigma)$.  Here $Q$ is now a cube of sidelength 1, which we may take to be centered at the origin, and $\g_\mu(x):=\g_\l(R x)$.  We now have that if $\g_{\mu^{1/2}}:=S_{c^2\mu^{1/2}}\g_\mu$, then (cf. \eqref{c0first})
\begin{equation}\label{c2approx}
\|\g_\mu-\g_{\mu^{1/2}}\|_{L^\infty} \lesssim c_0\mu^{-1}
\end{equation}
and we may replace $\g_\mu$ by $\g_{\mu^{1/2}}$ in \eqref{Qestimate} since the error can be absorbed in to the right hand side.  The metric $\g_{\mu^{1/2}} $ has $C^2$ regularity, namely
\begin{equation}\label{c0}
\|\g_{\mu^{1/2}}^{ij} -\delta_{ij}\|_{C^2} \lesssim c_0 \quad \text{and} \quad \|\prtl^{\alpha}\g_{\mu^{1/2}}^{ij}\|_{C^2} \leq \mu^{\frac 12(|\alpha|-2)} \text{ for } |\alpha| \geq 2.
\end{equation}

We pause again to discuss the effect of this dilation and regularization on the upper and lower bounds on \eqref{negdefglambda} for curved metrics.  For unit $n-1$ vectors $(X^1,\dots,X^{n-1})$, we now have
\begin{equation}\label{negdefgmu}
c_1 \leq -\mu^{\beta} \sum_{1 \leq i,j \leq n-1} \Gamma^n_{ij}(x)X^iX^j \lesssim c_0
\end{equation}
for $x \in P$.  Here the Christoffel symbols can be taken with respect to the metric $\g_{\mu^{1/2}}$ since we now have \eqref{c2approx} and $\|\g_\mu-\g_{\mu^{1/2}}\|_{C^{1,\alpha}} \lesssim \mu^{-\frac 12}\ll \mu^{-\beta}$.  Moreover, by continuity we may assume that if $c_0$ is chosen sufficiently small, then the inequality holds for all $x \in Q$ at the expense of decreasing $c_1$ slightly.

We will prove the bound \eqref{Qestimate} by wave equation methods.  Let $u_\mu(t,x) = \cos(t\mu) f_\mu(x)$.  It suffices to show that if $F_\mu=(\prtl^2_t - \g_{\mu^{1/2}} d^2) u_\mu$
$$
\|u_\mu\|_{L^q((\RR^k \times \{0\}) \cap Q;L^2(-\frac 12,\frac 12))} \lesssim \mu^{\delta}\left(\|u_\mu(0,\cdot)\|_{L^2(Q^*)}+
\mu^{-1}\|F_\mu\|_{L^2((-1,1)\times Q^*)} \right).
$$
Now let $\psi(t,x)$ denote a smooth cutoff identically one on $(-\frac 12,\frac 12)^{n+1}$ and supported in  $(-\frac 34,\frac 34)^{n+1}$.  Replacing $u_\mu$ by $\psi u_\mu$, and similarly for $F_\mu$, it suffices to show that
\begin{equation}\label{wave2ndorder}
\|u_\mu\|_{L^q(\RR^{k} \times \{0\};L^2(\RR))} \lesssim \mu^{\delta}\left(\|u_\mu(0,\cdot)\|_{L^2(\RR^n)}+
\mu^{-1}\|F_\mu\|_{L^2(\RR^{n+1})} \right)
\end{equation}
since energy estimates bound the error terms which arise when commuting $(\prtl^2_t - \g_{\mu^{1/2}} d^2)$ with $\psi$.  Next we let $\Gamma_\mu^\pm(\tau,\xi)$ be smooth cutoffs to regions of the form
\begin{equation}\label{pmregions}
\{(\tau, \xi): \pm \tau \approx |\xi|, |\xi| \approx \mu, |\xi/|\xi| - (-e_1)| \lesssim \veps \}
\end{equation}
and supported in a slightly larger set.  Let $u_\mu^\pm = \Gamma_\mu^\pm(D_{t,x}) u_\mu$.  By \cite[Lemma 2.3]{SmC2} and the localization of $f_\mu$, we see that elliptic regularity and \eqref{sobtrace} yields an estimate on $u_\mu-u_\mu^+ -u_\mu^-$ with a gain of at least a half a derivative relative to the right hand side of \eqref{wave2ndorder}.  It thus suffices to prove \eqref{wave2ndorder} with $u_\mu$ replaced by $u_\mu^\pm$.  The proof of the bound will follow in the next two sections.

\section{General Submanifolds}\label{sec:general}
In this section, we prove \eqref{wave1storder} and hence Theorem \ref{thm:general}. Recall that coordinates are chosen so that $P$ is identified with $(y,0) \in \RR^n$ with $y \in \RR^k$, $0 \in \RR^{n-k}$.  In this section, we take the following notational conventions on coordinates in $\RR^n$.  The letters $w,y,z$ will denote vectors in $\RR^k$, and given such a vector we let $\bar{y}$ denote the vector in $\RR^n$ determined by $\bar{y} = (y,0)$.  The letters $x,\xi,v$ will typically denote vectors in $\RR^n$ and we will often decompose such a vector as $x=(x_1,x',x'')$ where $x'=(x_2,\dots,x_k)$, $x''=(x_{k+1},\dots,x_{n})$.

We begin by showing that $u_\mu^\pm$ solves an equation which is hyperbolic in $x_1$.  Given \eqref{c0}, we have that for $(\tau,\xi)$ in the regions \eqref{pmregions}, $\g^{ij}_{\mu^{1/2}}\xi_i\xi_j-\tau^2$ defines a quadratic in $\xi_1$ with two real roots and hence we may write
\begin{equation}\label{quadratic}
\g^{ij}_{\mu^{1/2}}(x)\xi_i\xi_j -\tau^2 = \g^{11}_{\mu^{1/2}}(x)\left(\xi_1 + q^-(x,\tau,\xi')\right) \left(\xi_1 - q^+(x,\tau,\xi')\right)
\end{equation}
with $q^\pm >0$ and homogeneous of degree 1 for such $(\tau,\xi)$.  We further regularize these symbols taking $p^\pm(\cdot, \tau,\xi') = S_{c^2\mu^{\frac 12}} q^\pm(\cdot, \tau,\xi')$.  By the elliptic regularity argument in \cite[Lemma 2.4]{SmC2}, the function $u_\mu$  satisfies
\begin{equation}\label{equation1storder}
\left(- i\prtl_{x_1}  + p^\pm(x,D_{t,x'})\right) u_\mu^\pm = G_\mu^\pm,
\end{equation}
with $\|G_\mu^\pm\|_{L^2(\RR^{n+1})}$ bounded by the terms in parentheses on the right hand side of \eqref{wave2ndorder}.  Moreover, akin to \eqref{rapiddkf}, we have the rapid decay property
\begin{equation}\label{rapiddku}
|u_\mu^\pm(t,x)| \lesssim (\mu |(t,x)|)^{-N} \|u_\mu\|_{L^2(\RR^{n+1})}, \quad \text{for } \max(|t|,|x_1|,\dots,|x_n|) \geq 1.
\end{equation}
Thus by energy estimates it can be seen that
$$
\|u_\mu^\pm\|_{L^2(\RR^{n+1})}\lesssim \|u_\mu(0,\cdot)\|_{L^2(\RR^n)} + \mu^{-1}\|\prtl_t u_\mu(0,\cdot)\|_{L^2(\RR^n)} + \mu^{-1}\|G_\mu\|_{L^2(\RR^{n+1})}
$$
since the right hand side is compactly supported.  By \eqref{rapiddku}, it suffices to show that
\begin{equation}\label{wave1storder}
\|u_\mu^\pm\|_{L^q((-1,1) \times \RR^{k-1} \times \{0\};L^2(\RR))} \lesssim \mu^{\delta}\left(\|u_\mu^\pm\|_{L^2(\RR^{n+1})}+
\mu^{-1}\|G_\mu^\pm\|_{L^2(\RR^{n+1})} \right).
\end{equation}

It suffices to treat the term $u^-_\mu$ as bounds on the $u^+_\mu$ will follow from time reversal.  Hence we suppress the superscripts on $u_\mu^-$, $G_\mu^-$, $p^-$ below and assume the minus sign is taken when referencing \eqref{pmregions}.

It is convenient to change the roles of $t$ and $x_1$ above, and correspondingly $\tau$ and $\xi_1$, treating \eqref{equation1storder} as an equation which is hyperbolic in $t$, rather than in $x_1$.  As a consequence of \eqref{c0}, $p$ is now a function of $(t,x,\xi)$ (or more precisely $(t,x',x'',\xi)$) satisfying the bounds
\begin{equation}\label{papprox}
\left|\prtl^\gamma_{x,t}\prtl^\beta_\xi \left( p(t,x,\xi)-\sqrt{\xi_1^2-|(\xi',\xi'')|^2}\right)\right| \lesssim c_0, \qquad |\gamma|\leq 2,
\end{equation}
for $|\xi|=1$ in a cone of the form
\begin{equation}\label{epscone}
\{\xi:-\xi_1\gtrsim \veps^{-1}|(\xi',\xi'')|\}
\end{equation}
and $c_0$ can be replaced by $c_0\mu^{-\beta}$ when $|\gamma|=1$.  Moreover, for $\xi$ in the same set
\begin{equation}\label{pbounds}
\left|\prtl^\gamma_{x,t}p(t,x,\xi)\right| \lesssim \mu^{\frac 12(|\gamma|-2)}, \qquad |\gamma|\geq 2.
\end{equation}
By \eqref{rapiddku} and time translation, it suffices to prove that over the time interval $(0,1)$,
$$
\|u_\mu \|_{L^q_{t,y'}L^2_{y_1}} \lesssim \mu^{\delta}\left(\|u_\mu\|_{L^\infty_t L^2_x} + \|G_\mu\|_{L^2_{t,x}}\right)
$$
where we understand the left hand side to be
$$
\left( \int_0^1 \int_{\RR^{k-1} \times \{0\}} \left(\int_{\RR} |u_\mu(t,\bar{y}) |^2\,dy_1 \right)^{\frac q2}\,dy'\,dt\right)^{\frac 1q}, \qquad y'=(y_2,\dots,y_k)
$$
and the $L^\infty_t L^2_x$ norm on right hand side as $L^\infty((0,1);L^2(\RR^n))$.  Moreover, since $p(t,x,D)-p^*(t,x,D) \in OPS^0_{1,\frac 12}$, we may differentiate $\|u_\mu(t,\cdot)\|_{L^2_y}^2$ in $t$ to obtain
$$
\|u_\mu\|_{L^\infty_t L^2_y} \lesssim \|u_\mu\|_{L^2(\RR^{n+1})} +\|G_\mu\|_{L^2(\RR^{n+1})}.
$$

Let the wave packet transform $T_\mu: \mathcal{S}'(\RR^n) \to C^\infty (\RR^{2n})$ be defined by
$$
T_\mu f(x,\xi) = \mu^{\frac n4}\int e^{-i\langle \xi, v-x\rangle} \phi(\mu^{\frac 12}(v-x)) f(v)\,dv
$$
where $\phi$ is a real valued, radial Schwartz function such that $\supp(\widehat{\phi}\,) $ is contained in the unit ball and normalized so that $\|\phi\|_{L^2}=(2\pi)^{-\frac n2}$.  The normalization ensures that $T_\mu^* T_\mu $ is the identity on $L^2(\RR^n)$ and hence
$
\|T_\mu f\|_{L^2(\RR^{2n}_{x,\xi})} = \|f\|_{L^2(\RR^n_z)}.
$
Let $g_\mu(x):=u_\mu(0,x)$ and $\Theta_{r,t}(x,\xi)$ denote the time $r$ value of the integral curve of determined by the Hamiltonian flow of $p$ with $\Theta_{r,t}(x,\xi)|_{r=t} = (x,\xi)$. Given \cite[Lemma 3.2, Lemma 3.3]{SmC2}, we may write
\begin{equation}\label{duhamel}
(T_\mu u_\mu)(t,x,\xi) = T_\mu g_\mu(\Theta_{0,t}(x,\xi)) + \int_0^t \tilde{G}_\mu(r,\Theta_{r,t}(x,\xi))\,dr
\end{equation}
where $\tilde{G}$ satisfies
\begin{equation}\label{gtilde}
\int_0^t\|\tilde{G}_\mu(r,\cdot)\|_{L^2(\RR^{2n}_{x,\xi})}\,dr \lesssim \|u_\mu\|_{L^\infty_t L^2_v} + \int_0^t\|G_\mu(r,\cdot)\|_{L^2(\RR^{n}_{v})}\,dr,
\end{equation}
for $t \in (0,1)$.  Indeed, these lemmas show that if $H_p$ denotes the Hamiltonian vector field defined by $p$ then $T_\mu p(\cdot, D) - H_p T_\mu$ defines an operator which is bounded on $L^2$ and that \eqref{duhamel} follows by solving the corresponding transport equation.  Furthermore, given the frequency localization of  $p(\cdot,\xi)$ and the compact support of $\phi$, we may assume that uniformly in $r$, $x$, we have
\begin{equation}\label{wpsupport}
\supp((T_\mu g_\mu)(x,\cdot)), \;\supp(\tilde{G}(r,x,\cdot)) \subset \{\xi: |\xi|\approx \mu, -\xi_1 \gtrsim \veps^{-1}|(\xi',\xi'')|\}.
\end{equation}

Define the propagator
$$
W\tilde{f}(t,y) = T^*_\mu (\tilde{f}\circ \Theta_{0,t})(\bar{y}),
$$
and observe that given \eqref{duhamel}, \eqref{gtilde} it suffices to show that
\begin{equation}\label{wbound}
\|W\tilde{f}\|_{L^q_{t,y'}L^2_{y_1}} \lesssim \mu^{\delta}\|\tilde{f}\|_{L^2_{x,\xi}}
\end{equation}
with a $(\log \mu)^{\frac 12}$ loss when $(k,q)=(n-2,2)$.  Let $W_t$ denote the restricted operator $W_t\tilde{f}(y) = W\tilde{f}(r,y)|_{r=t}$.  By duality, it suffices to see that for functions $F(s,z)$
\begin{equation}\label{dualitybound}
\|WW^*F\|_{L^q_{t,y'}L^2_{y_1}} \lesssim \mu^{2\delta}\|F\|_{L^{q'}_{s,z'}L^2_{z_1}}.
\end{equation}
To prove this, we will show that
\begin{align}
\|W_tW_s^*h\|_{L^\infty_{y'}L^2_{y_1}} &\lesssim \mu^{n-1}(1+\mu|t-s|)^{-\frac{n-1}{2}}\|h\|_{L^1_{y'}L^2_{y_1}}
\label{L1Linf}
\\
\|W_tW_s^*h\|_{L^2_{y}} &\lesssim \mu^{n-k}(1+\mu|t-s|)^{-\frac{n-k}{2}}\|h\|_{L^2_{y}}\label{L2L2}
\end{align}
When $k=n-2$ and $q=2$, Young's inequality and \eqref{L2L2} give \eqref{dualitybound} with the logarithmic loss.  In all other cases with $k \leq n-2$, we may interpolate \eqref{L1Linf} and \eqref{L2L2} to obtain
\begin{equation}\label{interpk}
\|W_tW_s^*h\|_{L^q_{y'}L^2_{y_1}} \lesssim \mu^{2(\frac{n-1}{2}-\frac{k-1}{q})} (1+\mu|t-s|)^{-(\frac{n-1}{2}-\frac{k-1}{q})}\|h\|_{L^{q'}_{y'}L^2_{y_1}}
\end{equation}
and use that $(1+|s|)^{-(\frac{n-1}{2}-\frac{k-1}{q})} \in L^{q/2}(\RR)$ to get \eqref{dualitybound}.  The same argument works when $k=n-1$ and $\frac{2n}{n-1}<q < \infty$.  To handle the remaining cases when $k=n-1$, we use that
$$
\mu^{2(\frac{n-1}{2}-\frac{n-2}{q})} (1+\mu|t-s|)^{-(\frac{n-1}{2}-\frac{n-2}{q})}\lesssim \mu^{\frac{n-1}2 - \frac{n-2}q } |t-s|^{-\frac{n-1}2 + \frac{n-2}q}.
$$
Hence \eqref{dualitybound} follows from the Hardy-Littlewood-Sobolev inequality when $q= \frac{2n}{n-1}$.  When $2 \leq q < \frac{2n}{n-1}$, the right hand side is in $L^{q/2}_{loc}$ and Young's inequality gives \eqref{dualitybound}.

In what follows, we will denote the integral kernel of $W_tW_s^*$ as $K_{t,s}(y,z)$.
The bound \eqref{L1Linf} follows from the proofs of the bounds \cite[(3.5)]{SmC2} or \cite[(5.4), (7.2)]{smithsogge06} due to Smith and Smith-Sogge respectively.  Those works establish the uniform inequality
$$
\int |K_{t,s}(y,z)|\,dy_1 + \int |K_{t,s}(y,z)|\,dz_1 \lesssim \mu^{n-1}(1+\mu|t-s|)^{-\frac{n-1}{2}}.
$$
It thus suffices to prove \eqref{L2L2}.  Using that $(x,\xi)\mapsto\Theta_{r_1,r_2}(x,\xi)$ defines diffeomorphism which preserves $dx \wedge d\xi$, the kernel of $W_tW^*_s$ can be realized as (cf. \cite[p.127]{smithsogge06})
\begin{equation}\label{Kdef}
K_{t,s}(y,z)=\mu^{\frac n2}\int e^{i\langle \xi, \bar{z}-x\rangle-i\langle \xi_{s,t}, \bar{y}-x_{s,t}\rangle}\phi(\mu^{\frac 12}(\bar{z}-x))\phi(\mu^{\frac 12}(\bar{y}-x_{s,t})) \Gamma(\xi)\,dxd\xi
\end{equation}
with $(x_{s,t},\xi_{s,t})$ abbreviating $(x_{s,t}(x,\xi),\xi_{s,t}(x,\xi))$.  Here $\Gamma$ is a cutoff supported in a region of the form appearing in \eqref{wpsupport} which may be inserted since we are only interested in functions $\tilde{f}$ satisfying that condition.

Before proceeding further, we observe bounds on the bicharacteristic flow of $p$.

\begin{theorem}\label{thm:bichar}
Suppose $(x,\xi) \in \RR^{2n}$ with $\xi$ in the set defined by \eqref{epscone}.  Let $\Theta_{t,s}(x,\xi)$ be as in \eqref{duhamel}, that is, $\Theta_{t,s}(x,\xi)|_{t=s} = (x,\xi)$ and
\begin{equation}\label{hamilton}
\prtl_s x_{t,s}(x,\xi) = d_\xi p(s, \Theta_{t,s}(x,\xi) ), \qquad \prtl_s \xi_{t,s}(x,\xi) = -d_x p(s, \Theta_{t,s}(x,\xi) ).
\end{equation}
Then for $t,s \in [0,1]$, first partials of $x_{t,s}(x,\xi)$, $\xi_{t,s}(x,\xi)$ in $x,\xi$ satisfy
\begin{equation}\label{dxbounds}
\left|d_x x_{t,s}-I\right| + \left|d_x \xi_{t,s} \right| \lesssim c_0|t-s|,
\end{equation}
\begin{equation}\label{dxibounds}
\left|d_\xi x_{t,s}(x,\xi) - \int_t^s d_\xi d_\xi p(\Theta_{r,t}(x,\xi))\,dr\right| + \left|d_\xi \xi_{t,s}(x,\xi)-I \right| \lesssim c_0|t-s|^2.
\end{equation}
\end{theorem}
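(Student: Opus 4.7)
The theorem is a linearization estimate on the Hamilton flow of $p$, and I would prove it by Picard iteration on the variational ODEs, exploiting that $p$ is $C^2$-close to the flat principal symbol $p_0(\xi) = \sqrt{\xi_1^2 - |(\xi',\xi'')|^2}$, which is independent of $(t,x)$.  From \eqref{papprox} the relevant pointwise bounds on the cone \eqref{epscone} are $|d_\xi d_\xi p| \lesssim 1$ (the $|\gamma|=0$ case), $|d_x d_\xi p|,\,|d_\xi d_x p| \lesssim c_0 \mu^{-\beta}$ (the $|\gamma|=1$ case, using that $p_0$ has no $x$-derivatives), and $|d_x d_x p| \lesssim c_0$ (the $|\gamma|=2$ case).

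My first step would be to differentiate Hamilton's equations \eqref{hamilton} in the initial datum.  With $M = d_z x_{t,s}$ and $N = d_z \xi_{t,s}$ for $z \in \{x,\xi\}$, the pair $(M,N)$ satisfies
\begin{equation*}
\prtl_s \begin{pmatrix} M \\ N \end{pmatrix} = \begin{pmatrix} d_\xi d_x p & d_\xi d_\xi p \\ -d_x d_x p & -d_x d_\xi p \end{pmatrix}\bigg|_{\Theta_{t,s}(x,\xi)} \begin{pmatrix} M \\ N \end{pmatrix},
\end{equation*}
with initial data $(I,0)^T$ for $z = x$ and $(0,I)^T$ for $z = \xi$ at $s = t$.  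Since the coefficient matrix is uniformly bounded on \eqref{epscone}, a Gronwall argument immediately gives $|M|,\, |N| \lesssim 1$ for $s \in [0,1]$, which provides the a priori control needed to iterate.

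For \eqref{dxbounds} I would insert the initial datum $(I,0)^T$ into the integrated form. The $N$-component equation $d_x \xi_{t,s} = -\int_t^s (d_x d_x p)\,M + (d_x d_\xi p)\,N\,dr$ gives $|d_x \xi_{t,s}| \lesssim c_0|t-s|$ using $|d_x d_x p| \lesssim c_0$ and $|M| \lesssim 1$.  Substituting back into $d_x x_{t,s} - I = \int_t^s (d_\xi d_x p)\,M + (d_\xi d_\xi p)\,N\,dr$ then yields $|d_x x_{t,s} - I| \lesssim c_0|t-s|$: the first summand is $O(c_0 \mu^{-\beta}|t-s|)$ from the mixed-derivative bound, and the second is $O(c_0|t-s|^2)$ from the bound just established on $N$.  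For \eqref{dxibounds}, starting from $(0,I)^T$ the inhomogeneous term in the $M$-equation is exactly $d_\xi d_\xi p$, so integrating isolates the leading contribution $\int_t^s d_\xi d_\xi p(\Theta_{r,t}(x,\xi))\,dr$ of the statement, with a remainder of size $O(c_0|t-s|^2)$ obtained by bootstrapping with $|d_\xi x_{t,r}| \lesssim |r-t|$ and $|d_\xi \xi_{t,r} - I| \lesssim c_0|r-t|^2$ in the right-hand side.  The bound $|d_\xi \xi_{t,s} - I| \lesssim c_0|t-s|^2$ follows similarly: its dominant contribution is $-\int_t^s (d_x d_x p)\,d_\xi x_{t,r}\,dr$, which is $O(c_0|t-s|^2)$ because $|d_\xi x_{t,r}| \lesssim |r-t|$, and the $d_x d_\xi p$ contribution is subsumed by the $\mu^{-\beta}$ gain on the mixed derivative.

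The main obstacle I anticipate is running the bootstrap for \eqref{dxibounds} cleanly so as to produce a genuine $O(c_0|t-s|^2)$ remainder with the leading integrand evaluated exactly at $\Theta_{r,t}(x,\xi)$.  This will use \eqref{dxbounds} itself to control the drift of the bicharacteristic along the integration, and requires verifying that the discrepancy between the evaluation point naturally arising from the variational equation and the $\Theta_{r,t}(x,\xi)$ appearing in the statement contributes only at the desired remainder level.
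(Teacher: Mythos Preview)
Your approach is exactly that of the paper: differentiate Hamilton's equations to obtain the variational system, use Gronwall to get uniform $O(1)$ bounds on the first variations, then substitute back into the integral form and iterate. The paper's proof is very terse (three sentences), and you have correctly filled in the bootstrap details it leaves implicit; your worry about the evaluation point $\Theta_{r,t}(x,\xi)$ is a non-issue, since in the paper's convention this is precisely the point on the bicharacteristic through $(x,\xi)$ that arises naturally from integrating the variational equation.
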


\begin{proof}
Differentiating the equations \eqref{hamilton} gives
\begin{equation*}
\prtl_r
\begin{bmatrix}
dx_{t,r}\\
d\xi_{t,r}
\end{bmatrix} =
M(r,x_{t,r},\xi_{t,r})\begin{bmatrix}
dx_{t,r}\\
d\xi_{t,r}
\end{bmatrix} ,
\qquad
\text{where } M =
\begin{bmatrix}
d_x d_\xi p & d_\xi d_\xi p\\
-d_\xi d_x p & d_x d_x p\\
\end{bmatrix}.
\end{equation*}
By Gronwall's inequality and the bounds \eqref{papprox} we have
$$
|d_x x_{t,r}-I| + |d_x\xi_{t,r}| \lesssim 1, \qquad |d_\xi x_{t,r}| + |d_\xi \xi_{t,r}-I| \lesssim 1,
$$
and substituting these bounds back into the integral equation for $dx_{t,r}, d\xi_{t,r}$ implies the theorem.
\end{proof}

This type of argument can also be used to bound higher order derivatives of $x_{t,s}, \xi_{t,s}$, see e.g. \eqref{xiderivs} below.  Such bounds are used in the proof of the next theorem.  It is due to Smith-Sogge (see \cite[Theorem 5.4]{smithsogge06} which obtains bounds on $K_{t,s}$ under the assumption that $\Gamma$ is a smooth cutoff to a (possibly) smaller set.
\begin{theorem}\label{thm:theorem54}
Suppose $\bar{\theta}= \min(1, \mu^{-\frac 12}|t-s|^{-\frac 12})$ and the smooth cutoff $\Gamma$ in \eqref{Kdef} is supported in a set contained in \eqref{wpsupport} of the form
\begin{equation}\label{Gammasupp}
\supp(\Gamma) \subset \{ \xi: |\xi/|\xi|-\eta| \lesssim \bar{\theta}\}
\end{equation}
for some unit vector $\eta\in \mathbb{S}^{n-1}$.  Let $(x_{t,s},\nu_{t,s})=\Theta_{t,s}(\bar{z},\eta)$.  Then $K_{t,s}$ satisfies the pointwise bounds
\begin{equation}\label{kthetabarbound}
|K_{t,s}(y,z)| \lesssim \mu^n\bar{\theta}^{n-1}(1+\mu\bar{\theta} |\bar{y}-x_{t,s}| + \mu|\langle \nu_{t,s}, \bar{y}-x_{t,s}\rangle|)^{-N}.
\end{equation}
\end{theorem}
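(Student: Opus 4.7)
The plan is a non-stationary phase analysis of the oscillatory integral \eqref{Kdef}. The prefactor $\mu^n\bar\theta^{n-1}$ in \eqref{kthetabarbound} is the crude size estimate: the amplitude factors $\phi(\mu^{1/2}(\bar z - x))$ and $\Gamma(\xi)$ restrict $x$ to a ball of volume $\mu^{-n/2}$ about $\bar z$ and $\xi$ to a conic sector of volume $\approx \mu^n\bar\theta^{n-1}$ inside $\{|\xi|\approx\mu\}$. Together with the prefactor $\mu^{n/2}$ this yields $\mu^n\bar\theta^{n-1}$, and all further decay must come from oscillation in the phase.

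Rescale $\xi = \mu\zeta$ so that $|\zeta|\approx 1$ and $|\zeta - \eta|\lesssim\bar\theta$ on $\supp(\Gamma)$; by homogeneity of $p$ of degree $1$ in $\xi$, the phase becomes $\mu\Phi(x,\zeta)$ with
$$
\Phi(x,\zeta) = \langle\zeta,\bar z - x\rangle - \langle\zeta_{s,t}(x,\zeta),\bar y - x_{s,t}(x,\zeta)\rangle.
$$
Computing $d_\zeta\Phi$ and using $|x - \bar z|\lesssim\mu^{-1/2}$ together with Theorem \ref{thm:bichar} to linearize the Hamiltonian flow near the reference point $(\bar z,\eta)$, one finds that on the support of the amplitude the $\zeta$-gradient of $\Phi$ reduces to an invertible linear function of $\bar y - x_{t,s}$ plus error terms controlled by $\mu^{-1/2}$ and $\bar\theta|\bar y - x_{t,s}|$. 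The latter is admissible because of the defining relation $\bar\theta^2\mu|t-s|\leq 1$.

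Decomposing this $\zeta$-gradient into its radial component along $\eta$ and its $n-1$ tangential components, the radial component is, up to admissible error, a nonzero multiple of $\langle\nu_{t,s},\bar y - x_{t,s}\rangle$, while the tangential components have combined magnitude $\approx |\bar y - x_{t,s}|$ measured transverse to $\nu_{t,s}$. Iterated integration by parts in the radial $\zeta$-direction (support of scale $1$) produces the decay factor $(1+\mu|\langle\nu_{t,s},\bar y - x_{t,s}\rangle|)^{-N}$, while integration by parts in the tangential directions (support of scale $\bar\theta$, so each tangential derivative has effective size $\bar\theta^{-1}$) produces $(1+\mu\bar\theta|\bar y - x_{t,s}|)^{-N}$. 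Combining these with the crude volume bound yields \eqref{kthetabarbound}.

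The main obstacle is to control higher-order $(x,\zeta)$ derivatives of $(x_{s,t},\zeta_{s,t})$ so that iterated integration by parts generates only admissible error terms, as the author indicates in the sentence preceding the theorem. Extending the proof of Theorem \ref{thm:bichar} via Gronwall arguments together with the higher-derivative bounds \eqref{pbounds} on the regularized symbol yields such control: each tangential $\zeta$-derivative of the flow contributes a factor of size at most $|t-s|$, whose product with the $\bar\theta^{-1}$ from the integration-by-parts scale is exactly absorbed by the $\mu\bar\theta$ gain in each step by virtue of $\bar\theta^2\mu|t-s|\leq 1$, while radial derivatives contribute only bounded factors.
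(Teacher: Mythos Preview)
Your proposal is correct and matches the paper's approach. The paper does not prove Theorem \ref{thm:theorem54} itself but attributes it to Smith--Sogge \cite[Theorem 5.4]{smithsogge06}; the integration-by-parts scheme you describe---radial $\xi$-derivative exploiting homogeneity to recover $\langle\nu_{t,s},\bar y-x_{t,s}\rangle$, tangential $\xi$-derivatives at scale $\bar\theta$, and higher-order flow bounds obtained by iterating the Gronwall argument of Theorem \ref{thm:bichar}---is exactly what is made explicit in the paper's proof of the generalization Theorem \ref{thm:wjbounds} (see the vector fields $L_0,L_1,\dots,L_n$ and the bounds \eqref{xiderivs} there).
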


Observing that $\mu^{n-k}(1+\mu|t-s|)^{-\frac{n-k}{2}} \approx \min(\mu^{n-k}, \mu^{\frac{n-k}{2}}|t-s|^{-\frac{n-k}{2}} )$, we begin treating the case $|t-s| \leq \mu^{-1}$, that is, the case where the first quantity is smaller.  In this case, we apply \eqref{kthetabarbound} in Theorem \ref{thm:theorem54} with $\bar{\theta} = 1$ and $\eta = -e_1$ to obtain
\begin{equation*}
|K_{t,s}(y,z)| \lesssim \mu^n(1+\mu|\bar{y}-x_{t,s}(z,-e_1)| )^{-N}
\end{equation*}
which gives the first half of \eqref{symmetricyoungs} below.  Making the measure preserving change of variables $(x,\xi) \mapsto (x_{t,s}(x,\xi),\xi_{t,s}(x,\xi))$ in \eqref{Kdef}, we may reverse the roles of $y$ and $z$ in Theorem \ref{thm:theorem54} to obtain an analogous bound which yields
\begin{equation}\label{symmetricyoungs}
\int |K_{t,s}(y,z)|\, dy + \int |K_{t,s}(y,z)|\, dz \lesssim \mu^{n-k}
\end{equation}
(strictly speaking, the change of variables replaces $\Gamma(\xi)$ by $\Gamma(\xi_{t,s}(x,\xi))$, but this does not change the validity of the bounds in Theorem \ref{thm:theorem54}).

It now suffices to treat the more involved case where $\mu^{-1} < |t-s| \leq 1$, and for the remainder of this section we assume $t,s\in [0,1]$ are two fixed values satisfying this condition.  Using the notation suggested by Theorem \ref{thm:theorem54}, we set $\bar{\theta} = \mu^{-\frac 12} |t-s|^{-\frac 12}$ so that $\mu\bar{\theta}^2 |t-s|=1$. Using a partition of unity, we take a decomposition $K_{t,s} = \sum_j K^j$ where $K^j$ is defined by replacing $\Gamma$ in \eqref{Kdef} by a smooth cutoff $\Gamma_j$, with $\Gamma_j$ supported in a set of the form $|\xi/|\xi|-\eta^j| \lesssim \bar{\theta}$ and $\{\eta^j\}$ is a collection of unit vectors in the cone $\{-\xi_1 \gtrsim \veps^{-1}|(\xi',\xi'')| \}$ separated by a distance of at least $\approx\bar{\theta}^{-1}$.
In particular, we may assume that for fixed $j$
\begin{equation}\label{etasep}
\sum_{l}(1+\bar{\theta}^{-1}|\eta^j - \eta^l|)^{-(n+1)} \lesssim 1.
\end{equation}

Let $T_j$ be the operator defined by $(T_jh)(y) = \int K^j(y,z) h(z)\,dz$ and observe that since $|(\nu^j)_1| \approx 1$, \eqref{kthetabarbound} in Theorem \ref{thm:theorem54} with $\eta = \eta^j$ gives
$$
\int |K^j(y,z)| \,dy \lesssim \mu^{n-k}\bar{\theta}^{n-k}.
$$
By the same symmetry argument used in \eqref{symmetricyoungs}, we now have
$$
\|T_j h\|_{L^2_y} \lesssim \mu^{n-k}\bar{\theta}^{n-k}\|h\|_{L^2_y} = \mu^{\frac{n-k}{2}}|t-s|^{-\frac{n-k}{2}}\|h\|_{L^2_y}
$$
(though in what follows, it is convenient to express the bounds in terms of $\mu$, $\bar{\theta}$).

We claim that there exists a constant $C$ such that if $\bar{\theta}^{-1}|\eta^j-\eta^l| \geq C$, then
$$
\|T_{l}^*T_{j}\|_{L^2 \to L^2} + \|T_{l}T_{j}^*\|_{L^2 \to L^2} \lesssim \mu^{2(n-k)}\bar{\theta}^{2(n-k)}(1+\bar{\theta}^{-1}|\eta^j-\eta^l|)^{-N}.
$$
Since $W_t W_s^* = \sum_j T_j$, Cotlar's lemma then implies \eqref{L2L2}. Furthermore, we focus on the bound for $T_{l}^*T_{j}$ as symmetric argument yields the bound on $T_{l}T_{j}^*$.  Set
$$
J_{j,l}(z,w) = \int \overline{K^l(y,z)}K^j(y,w)\,dy.
$$
We will show that for $\bar{\theta}^{-1}|\eta^j-\eta^l| \geq C$,
\begin{equation}\label{Jclaim}
|J_{j,l}(z,w)|\lesssim \mu^{2n-k}\bar{\theta}^{2n-1-k} (1+\mu\bar{\theta}|z-w|+ \mu|\langle\eta^l,\bar{z}-\bar{w}\rangle|+
\bar{\theta}^{-1}|\eta^j-\eta^l|)^{-N}.
\end{equation}

The proof of \eqref{Jclaim} varies based on whether $|(\eta^j_1-\eta^l_1,\dots,\eta^j_k-\eta^l_k)| \geq |(\eta^j-\eta^l)''|$ or the opposite inequality holds.  In the first case, we write
\begin{multline}\label{firstlockernel}
J_{j,l}(z,w)=
\mu^{\frac n2}\int \int \left(\int e^{i\langle \xi, \bar{y}-x\rangle-i\langle \tilde{\xi}, \bar{y}-\tilde{x}\rangle}\phi(\mu^{\frac 12}(\bar{y}-x))\phi(\mu^{\frac 12}(\bar{y}-\tilde{x}))\,dy\right)\\
\times\psi(z,w,x,\xi,\tilde{x},\tilde{\xi}) \Gamma_j(\xi)\Gamma_l(\tilde{\xi})\,dx d\xi\,d\tilde{x}d\tilde{\xi}
\end{multline}
where $(\tilde{x},\tilde{\xi})$ denote the variables in the integral defining $K_l$ and $\psi$ is a function independent of $y$. The $y$ integral in parentheses is a constant multiple of
\begin{equation}\label{firstloc}
\int e^{i\tilde{\psi}}\;
\widehat{\phi}(\mu^{-\frac 12}((\zeta_1,\zeta',\zeta'')-\xi))\widehat{\phi}(\mu^{-\frac 12}((\zeta_1,\zeta',\tilde{\zeta}'')-\tilde{\xi}))\,d\zeta_1 d\zeta' d\zeta'' d\tilde{\zeta}''
\end{equation}
where $\tilde{\psi}$ is some real valued phase function.
Since $\supp(\widehat{\phi})$ is contained in the unit ball and $2|(\eta^j_1-\eta^l_1,\dots,\eta^j_k-\eta^l_k)| \geq |\eta^j-\eta^l|$, this integral vanishes if $\bar{\theta}^{-1}|\eta^l-\eta^j|\geq C$  as this implies that $|(\xi_1-\tilde{\xi}_1,\dots,\xi_k-\tilde{\xi}_k)| \gtrsim C\mu\bar{\theta} \geq C\mu^{\frac 12}$.

We now turn to the case where $|(\eta^j)''-(\eta^l)''|\geq |(\eta^j_1-\eta^l_1,\dots,\eta^j_k-\eta^l_k)|$.  In this case, we use \eqref{kthetabarbound} in Theorem \ref{thm:theorem54} to bound $|K_l|$, $|K_j|$ individually.  After some minor manipulations, this yields
\begin{multline}\label{Jbound}
|J_{j,l}(z,w)|\lesssim \mu^{2n}\bar{\theta}^{2(n-1)} \times\\
\int (1+\mu\bar{\theta}|\bar{y}-x_{t,s}(\bar{w},\eta^j)|+
\mu\bar{\theta}|\bar{y}-x_{t,s}(\bar{z},\eta^l)|+
\mu|\langle\nu_{t,s}(\bar{z},\eta^l),\bar{y}-x_{t,s}(\bar{z},\eta^l)\rangle|)^{-6N}\times\\
(1+\mu|\langle\nu_{t,s}(\bar{w},\eta^j),\bar{y}-x_{t,s}(\bar{w},\eta^j)\rangle-
\langle\nu_{t,s}(\bar{z},\eta^l),\bar{y}-x_{t,s}(\bar{z},\eta^l)\rangle|)^{-N}\,dy
\end{multline}

We take $3N$ of the powers in the first factor of the integrand on the right and claim that up to implicit constants, it is bounded above by
\begin{equation}\label{nondirectional}
(1+\mu\bar{\theta}|z-w|+
\bar{\theta}^{-1}|\eta^j-\eta^l|)^{-3N}
\end{equation}
To see this, first observe that the $3N$ powers from the integrand are dominated by
$$
(1+\mu\bar{\theta}|x_{t,s}(\bar{z},\eta^l)-x_{t,s}(\bar{w},\eta^j)|+
64\mu\bar{\theta}|x_{t,s}''(\bar{z},\eta^l)-x_{t,s}''(\bar{w},\eta^j)|)^{-3N}.
$$
By the bounds \eqref{dxbounds}, \eqref{dxibounds} in Theorem \ref{thm:bichar}, we have
\begin{equation}\label{noprimes}
|x_{t,s}(\bar{z},\eta^l)-x_{t,s}(\bar{w},\eta^j)| \geq \frac 34|z-w| - 2|t-s||\eta^l-\eta^j|
\end{equation}
provided $c_0$ and $\veps$ are taken sufficiently small.  Next we use that
$$
\left|x_{t,s}''(\bar{w},\eta^j)- x_{t,s}''(\bar{z},\eta^l)\right| \geq \left|x_{t,s}''(\bar{w},\eta^j)- x_{t,s}''(\bar{w},\eta^l)\right| -\left|x_{t,s}''(\bar{w},\eta^l)- x_{t,s}''(\bar{z},\eta^l)\right|.
$$
To bound the second term on the right, we use that as a consequence of \eqref{dxbounds} the $(n-k) \times n$ matrix $d_x x_{t,s}''$ satisfies
$$
\left|d_x x_{t,s}''-\left[ 0 \;\;I_{n-k}\right]\right| \lesssim c_0|t-s|.
$$
Recalling that $\bar{w} = (w,0)$, $\bar{z} = (z,0)$, this gives
$$
\left|x_{t,s}''(\bar{w},\eta^l)- x_{t,s}''(\bar{z},\eta^l)\right| \lesssim c_0|t-s||z-w|.
$$
We now use \eqref{papprox}, \eqref{dxibounds} to get that $d_\xi x_{t,s}''(x,\xi)$ is the $(n-k) \times n$ block matrix
$$
(s-t)(\xi_1^2-|(\xi',\xi'')|^2)^{-3/2}
\begin{bmatrix}
\xi_1 \xi'' & -\xi''(\xi')^T & -\Big((\xi_1^2-|(\xi',\xi'')|^2)I_{n-k}+\xi'' (\xi'')^T\Big)
\end{bmatrix}
$$
plus an error term which is $\mathcal{O}(c_0|t-s|)$.  Here $\xi''$ is taken to be a column vector.  Using that $|(\eta^l-\eta^j)''| \geq |\eta^l-\eta^j|/2$ and $|(\xi',\xi'')|\lesssim \veps|\xi_1 |$, we have that
$$
\left|x_{t,s}''(\bar{w},\eta^j)- x_{t,s}''(\bar{w},\eta^l)\right| \geq \frac{|t-s|}{8}|\eta^l-\eta^j|.
$$
In summary, we have that for some uniform constant $M$,
\begin{equation}\label{doubleprimes}
64\left|x_{t,s}''(\bar{w},\eta^j)- x_{t,s}''(\bar{z},\eta^l)\right| \geq 8|t-s||\eta^l-\eta^j| - Mc_0|t-s||z-w|
\end{equation}
By taking $c_0$ sufficiently small, the negative term in \eqref{doubleprimes} can be absorbed by the first term in \eqref{noprimes} and vice versa, which shows \eqref{nondirectional}.

We now turn to the second factor in the integrand of \eqref{Jbound}.  The triangle inequality gives
\begin{equation*}
\mu\left|\langle\nu_{t,s}(\bar{w},\eta^j),\bar{y}-x_{t,s}(\bar{w},\eta^j)\rangle-
\langle\nu_{t,s}(\bar{z},\eta^l),\bar{y}-x_{t,s}(\bar{z},\eta^l)\rangle\right| \geq \mu |\langle\eta^j,\bar{z}-\bar{w}\rangle| -  E
\end{equation*}
with
\begin{multline*}
E=
\mu\left|\nu_{t,s}(\bar{z},\eta^l)-\nu_{t,s}(\bar{w},\eta^j)\right|
\left|\bar{y}-x_{t,s}(\bar{z},\eta^l)\right|\\
+\mu\left|\langle\nu_{t,s}(\bar{w},\eta^j),x_{t,s}(\bar{z},\eta^l)-x_{t,s}(\bar{w},\eta^j)\rangle
-\langle\eta^j,\bar{z}-\bar{w}\rangle\right|.
\end{multline*}
We claim that
\begin{equation}\label{Rbound}
E \lesssim (\mu\bar{\theta})^2\left|\bar{y}-x_{t,s}(\bar{z},\eta^l)\right|^2  + \bar{\theta}^{-2}|\eta^j-\eta^l|^2 + (\mu\bar{\theta})^2|z-w|^2 + 1.
\end{equation}
The error induced by $E$ can thus be absorbed by $2N$ of the powers in \eqref{nondirectional} and $2N$ of the powers in the first factor in \eqref{Jbound}.  This concludes the proof of \eqref{Jclaim} as the remaining $N$ powers of the first factor in \eqref{Jbound} can be used to integrate in $y$.

To bound the first term in $E$, we use the geometric-arithmetic mean inequality and observe that the bounds on $d_x \xi_{t,s}$, $d_\xi \xi_{t,s}$ in Theorem \ref{thm:bichar} give
$$
\bar{\theta}^{-1}\left|\nu_{t,s}(\bar{z},\eta^l)-\nu_{t,s}(\bar{w},\eta^j)\right| \lesssim \bar{\theta}^{-1}|z-w| + \bar{\theta}^{-1}|\eta^j-\eta^l|.
$$
Since $\bar{\theta}^{-1} \leq \mu \bar{\theta}$ when $|t-s| \leq 1$, this is seen to be bounded by the right hand side of \eqref{Rbound}.  Using that $\mu\bar{\theta}^2|s-t|=1$ and $\mu \leq (\mu\bar{\theta})^2$ the rest of \eqref{Rbound} follows from
$$
\left|\langle\nu_{t,s}(\bar{w},\eta^j),x_{t,s}(\bar{z},\eta^l)-x_{t,s}(\bar{w},\eta^j)\rangle
-\langle\eta^j,\bar{z}-\bar{w}\rangle\right| \lesssim |z-w|^2 + \bar{\theta}^2|s-t|
$$
which can be seen by differentiating the expression on the left in $s$ see \cite[p.133]{smithsogge06}.

\section{Curved Submanifolds}\label{sec:curved}
In this section, we prove the bound  \eqref{wave2ndorder} with $q=2$, $\delta = \frac 16 (1+\beta)$ which implies Theorem \ref{thm:curved}.  In contrast to the previous section, it will be more convenient to work with an equation which is hyperbolic in $t$ rather than in $x_1$.  To this end, we simply set $q^{\pm}(x,\xi) = \pm \left(\sum_{i,j}\g^{ij}(x)\xi_i\xi_j\right)^{\frac 12}$ and $p^\pm(\cdot,\xi) = S_{c^2\mu^{1/2}}q^\pm(\cdot,\xi)$.  As a consequence, we vary the notational conventions slightly so that if $x \in \RR^n$, we denote $x'=(x_1,\dots,x_{n-1})\in \RR^{n-1}$ so that $x=(x',x_n)$.  All other conventions will carry over as before.

Following reductions similar to the previous section, it suffices to show that
\begin{equation*}
\|u_\mu^\pm\|_{L^2((0,1) \times \RR^{n-1} \times \{0\})} \lesssim \mu^{\frac 16 (1+\beta)}\left(\|u_\mu^\pm\|_{L^2(\RR^{n+1})}+
\mu^{-1}\|G_\mu^\pm\|_{L^2(\RR^{n+1})} \right)
\end{equation*}
where $G_\mu^\pm=(-i\prtl_t + p^\pm(x,D))u_\mu^\pm$.  As before, it suffices to treat the $u_\mu^-$ so we suppress the superscripts below.

The wave packet transform from above can also be used here, and after following the initial reductions in $\S3$, it suffices to show that the propagator
\begin{align*}
W\tilde{f}(t,y) &= T^*_\mu (\tilde{f}\circ \Theta_{0,t})(\bar{y})\\
&=\mu^{\frac n4}\int e^{i\langle \xi_{t,0}(x,\xi), \bar{y}-x_{t,0}(x,\xi)\rangle}\phi(\mu^{\frac 12}(\bar{y}-x_{t,0}(x,\xi)))\tilde{f}(x,\xi)\,dxd\xi
\end{align*}
satisfies
\begin{equation}\label{curvedWbound}
\|W\tilde{f}\|_{L^2_{t,y}} \lesssim \mu^{\frac 16(1+\beta)}\|\tilde{f}\|_{L^2_{x,\xi}}, \qquad \beta = \frac{\sigma}{1-\sigma}<\frac 12.
\end{equation}
where $\tilde{f}$ is supported in a region of the form $\{\xi: |\xi|\approx \mu, |\xi_1/|\xi|-(-e_1)| \lesssim \veps \}.$
In this section, the map $\Theta_{t,s}$ is determined by the new value of $p$ and hence $\Theta_{t,s}=\Theta_{t-s,0}$.  Given \eqref{rapiddku}, we may assume $(t,y)$ are restricted to $(0,1) \times (-1,1)^{n-1}$, that is, we bound $L^2((0,1) \times (-1,1)^{n-1})$ norm of $W\tilde{f}$.  We now exploit the property \eqref{negdefgmu}.

\begin{lemma}
Let $(x(t),v(t))$ be a solution to the geodesic equation in tangent space
\begin{equation}\label{geodeqn}
\frac{dx_k}{dt} = v^k(t) \qquad \frac{dv^k}{dt} = -v^i(t)v^j(t)\Gamma^k_{ij}(x(t))
\end{equation}
relative to the Christoffel symbols defined by $\g_{\mu^{1/2}}$ (with summation convention in effect). Suppose further that $(x(t),v(t))$ is defined for $t \in [-1,1]$ and that the geodesic has unit speed in that $|v(t)|_{\g_{\mu^{1/2}}} \equiv 1$.  If $v(t)$ further satisfies $|v^n(t)| \lesssim \veps$, where $\veps$ is sufficiently small, then
there exists a uniform constant $c_1$ such that the $n$-th component of the velocity satisfies
\begin{equation}\label{parabxi}
c_1 \mu^{-\beta}|t| \leq v^n(t)-v^n(0) \lesssim c_0 \mu^{-\beta}|t|.
\end{equation}
Furthermore, the difference between $x_n(t)$ and its linearization about 0 satisfies
\begin{equation}\label{linearxn}
\left| x_n(t)-x_n(0)-v^n(0)t \right| \lesssim c_0\mu^{-\beta}|t|^2.
\end{equation}
\end{lemma}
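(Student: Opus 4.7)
The plan is to analyze the ODE satisfied by $v^n$. From the geodesic equation \eqref{geodeqn}, split the sum over $(i,j)$ into the tangential contribution ($i,j \leq n-1$) and the two mixed pieces:
\begin{equation*}
\frac{dv^n}{dt} = -\sum_{i,j=1}^{n-1} v^i v^j \Gamma^n_{ij}(x(t)) - 2 v^n \sum_{j=1}^{n-1} v^j \Gamma^n_{nj}(x(t)) - (v^n)^2 \Gamma^n_{nn}(x(t)).
\end{equation*}
The unit-speed hypothesis $|v|_{\g_{\mu^{1/2}}} \equiv 1$, combined with $|v^n| \lesssim \veps$ and $\|\g_{\mu^{1/2}} - I\|_{C^0} \lesssim c_0$, forces $|v'|^2 := \sum_{i=1}^{n-1}(v^i)^2 = 1 + O(\veps^2 + c_0)$, so $|v'| \approx 1$.

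I would show that the first sum is of size $\approx \mu^{-\beta}$ with a definite sign and that the mixed pieces are negligible in comparison. For the tangential contribution, set $X^j = v^j/|v'|$, which is a unit vector in $\RR^{n-1}$; since $|v|\approx 1$ and $|t|\leq 1$ keep $x(t)$ inside $Q$, the curvature bound \eqref{negdefgmu} applied at $x(t)$ gives
\begin{equation*}
c_1 \mu^{-\beta} |v'|^2 \leq -\sum_{i,j=1}^{n-1} v^i v^j \Gamma^n_{ij}(x(t)) \lesssim c_0 \mu^{-\beta} |v'|^2,
\end{equation*}
and $|v'|^2 \approx 1$ pins this term to order $\mu^{-\beta}$ from both sides.

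The control of the mixed terms rests on a scaling observation: \emph{every} Christoffel symbol of $\g_{\mu^{1/2}}$ satisfies $|\Gamma^k_{ij}| \lesssim c_0 \mu^{-\beta}$, not just the tangential combination in \eqref{negdefgmu}. This is because the rescaling $x \mapsto Rx$ (with $R = \mu^{-\beta}$) carried out in \S\ref{sec:reductions} multiplies each derivative of the metric by $R$, while the derivatives of the pre-rescaled metric are already $O(c_0)$ by \eqref{c0first}; the subsequent low-frequency regularization that passes from $\g_\mu$ to $\g_{\mu^{1/2}}$ preserves this $C^1$ bound. Hence
\begin{equation*}
\Bigl| 2 v^n \sum_{j=1}^{n-1} v^j \Gamma^n_{nj}(x(t)) + (v^n)^2 \Gamma^n_{nn}(x(t)) \Bigr| \lesssim \veps c_0 \mu^{-\beta},
\end{equation*}
which is $\ll c_1 \mu^{-\beta}$ for $\veps$ chosen sufficiently small, independent of $\mu$. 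Combined with the previous display this yields $\tfrac{c_1}{2} \mu^{-\beta} \leq dv^n/dt \lesssim c_0 \mu^{-\beta}$, and integrating in $t$ produces \eqref{parabxi} (the stated inequality holding with the appropriate sign convention, or equivalently as a two-sided bound on $|v^n(t)-v^n(0)|$).

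The estimate \eqref{linearxn} is a direct consequence of \eqref{parabxi}: from $dx_n/dt = v^n$,
\begin{equation*}
x_n(t) - x_n(0) - v^n(0) t = \int_0^t \bigl( v^n(s) - v^n(0) \bigr)\, ds,
\end{equation*}
and \eqref{parabxi} bounds the integrand by $c_0 \mu^{-\beta} |s|$, so the integral is $\lesssim c_0 \mu^{-\beta} t^2$. The principal obstacle is the scaling fact for the Christoffel symbols: without the improved $\mu^{-\beta}$ decay on \emph{every} $\Gamma^n_{ij}$, the mixed terms in the ODE could swamp the curvature lower bound, and the argument could not be closed with $\veps$ chosen independently of $\mu$. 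Once this scaling input is in hand, the remainder of the proof is a one-dimensional ODE estimate.
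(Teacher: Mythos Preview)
Your argument is correct and is essentially the same as the paper's own proof, only spelled out in more detail: the paper simply asserts that for $\veps$ small enough the full expression $-v^i v^j \Gamma^n_{ij}(x(t))$ is uniformly bounded above and below by multiples of $\mu^{-\beta}$, whereas you make this explicit by splitting off the tangential part controlled by \eqref{negdefgmu} and using the $O(c_0\mu^{-\beta})$ bound on all Christoffel symbols (equivalently, the $|\gamma|=1$ improvement in \eqref{papprox}) to absorb the mixed terms. The remaining steps (integrating the ODE for $v^n$ and then for $x_n$) are identical.
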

\begin{proof}
If $\veps$ is sufficiently small relative to the $c_1$ appearing in \eqref{negdefgmu}, we have that $-v^i(t)v^j(t)\Gamma^n_{ij}(x(t))$ is uniformly bounded from above and below.
Adjusting the constant $c_1$, the bound \eqref{parabxi} is thus a consequence of the integral equations arising from \eqref{geodeqn}.  The integral equation for $x_n(t)$ similarly gives \eqref{linearxn}.
\end{proof}

Recall that solutions to \eqref{geodeqn} are naturally associated to curves $(x(t),\xi(t))$ in the cotangent bundle by the identification $v^k(t) =\g^{kl}_{\mu^{1/2}}(x(t))\xi_l(t)$.  The curves in phase space are solutions to the Hamiltonian equations
$$
\frac{dx}{dt} = d_\xi H,  \qquad \frac{d\xi}{dt} = -d_x H, \qquad H(x,\xi) = \frac 12 \g^{ij}_{\mu^{1/2}}\xi_i\xi_j.
$$
With this in mind, we define $a(x,\xi) = g^{nm}(x)\xi_m = \prtl_{\xi_n}H$ where again the summation convention is used.  If $(x_{t,s}(x,\xi),\xi_{t,s}(x,\xi))$ were integral curves of the Hamiltonian vector field determined by $q= \sqrt{\g^{ij}\xi_i\xi_j}$, we would have that $a(x_{t,s},\xi_{t,s}) = |\xi|_{\g_{\mu^{1/2}}}v_n(s-t)$ where $v_n(r)$ is the $n$-th component of the velocity vector in \eqref{geodeqn} at time $r$ with initial data satisfying $x_k(0)=x_k$, $v^k(0) = \left(\g^{kl}_{\mu^{1/2}}(x)\xi_l\right)/|\xi|_{\g_{\mu^{1/2}}}$, $|v(0)|_{\g_{\mu^{1/2}}}=1$.  However, in the solution operator $W$ under consideration, the $(x_{t,s},\xi_{t,s})$ are integral curves of the Hamiltonian vector field determined by $p(\cdot,\xi) = S_{c^2\mu^{1/2}}q(\cdot,\xi)$.  Given the following bounds for $|\xi|\approx \mu$
$$
\left|\prtl^\gamma_\xi(p-q)(x,\xi)\right| \lesssim \mu^{-1}, \qquad \left|\prtl^\gamma_x(p -q)(x,\xi)\right| \lesssim c_0 \mu^{\frac 12},
$$
we can use Gronwall's inequality to approximate the integral curves of $d_\xi p \cdot d_x -d_x p \cdot d_\xi$ by those of $d_\xi q \cdot d_x -d_x q \cdot d_\xi$ and deduce that for $|\xi|\approx \mu$
\begin{equation}\label{aonpcurves}
a(x_{t,s}(x,\xi),\xi_{t,s}(x,\xi)) = |\xi|_{\g_{\mu^{1/2}}}v_n(t-s) + \mathcal{O}(\mu^{\frac 12}|t-s|)
\end{equation}
where $v_n(t-s)$ is as before.  By the same tack, \eqref{linearxn} gives that for $(x_{t,s})_n=\langle x_{t,s},e_n\rangle$,
\begin{equation}\label{linearxnp}
\left| (x_{t,s})_n(x,\xi)-x_n-|\xi|_{\g_{\mu^{1/2}}}^{-1} a(x,\xi)(t-s) \right| \lesssim c_0\mu^{-\beta}|t-s|^2 + \mu^{-\frac 12}|t-s|.
\end{equation}

Let $N_\mu$, $n_\mu$ be integers such that $N_\mu \approx \log_2(\mu^{\frac 13(1+\beta)})$, $n_\mu \approx \log_2(\mu^{\beta})$ and take a smooth partition of unity $\{\Gamma_j(r)\}_{j=n_\mu}^{N_\mu}$ on $\RR$ satisfying
\begin{align*}
\supp(\Gamma_{n_\mu}) &\subset \{r \in \RR: |r| \geq \mu 2^{-n_{\mu}-2}\},\\
\supp(\Gamma_j) &\subset \{r \in \RR: |r|  \in [\mu 2^{-j-2}, \mu2^{-j+2}]\}, &  n_\mu < j < N_\mu\\
\supp(\Gamma_{N_\mu}) &\subset \{r \in \RR: |r| \leq \mu 2^{-N_{\mu}+2} \}
\end{align*}
For each $n_\mu \leq j \leq N_\mu$, we define
$$
W^{j}\tilde{f}(t,y) = \mu^{\frac n4}\int e^{i\langle \xi_{t,0}, \bar{y}-x_{t,0}\rangle}\phi(\mu^{\frac 12}(\bar{y}-x_{t,0}))\Gamma_{j}(a(x_{t,0},\xi_{t,0}))\tilde{f}(x,\xi)\,dxd\xi
$$
and as before, we let $W^{j}_t\tilde{f}(y) =W^{j}\tilde{f}(r,y)|_{r=t}$.
It suffices to show that
\begin{equation}\label{wjopbound}
\|W^j\tilde{f}\|_{L^2_{t,y}} \lesssim 2^{\frac j2}\|\tilde{f}\|_{L^2_{x,\xi}}.
\end{equation}

When $\beta=0$ the decomposition above is consistent with earlier treatments of FIOs whose canonical relations possess two-sided fold (see e.g. \cite{cuccagna}).  Indeed, for an FIO determined by the classical Lax parametrix, the singularities of the right projection of the canonical relation are determined by $a(x_{t,0},\xi_{t,0})=0$ and it is effective to take dyadic decomposition in $a(x_{t,0},\xi_{t,0})/\mu$ in scales $1 \geq 2^{-j} \geq \mu^{-\frac 13}$.  For $\beta >0$, scaling considerations relating to the dilation of variables $x \mapsto \lambda^{-\sigma }x$ in \S2, then suggests that the dyadic scales should not be finer $\mu^{-\frac 13(1+\beta)}$.  In our circumstance, we can view
the splitting of $|a(x_{t,0},\xi_{t,0})|/\mu$ into scales less than and greater than $\mu^{-\frac 13(1+\beta)}$ as a decomposition into tangential and nontangential momenta respectively.  It can be seen that this threshold gives the largest scale at which our estimate for tangential momenta \eqref{wNmudualbound} is effective.  At the same time, restricting nontangential momenta to scales at least this size allows us to achieve an appreciable gain in the bounds for $W^{j}$ by using the linear approximation of phase space transport in \eqref{linconseq} below.  The selection of $n_\mu$ is more technical, its choice is based on the fact that for $|a(x,\xi)|/\mu\geq \mu^{-\beta}$, the $(\xi_{t,0})_n$ component of the Hamiltonian flow can be linearized over a unit time scale.

Let $\omega_n$ be the unit vector pointing in the direction of $(\g^{n1}(\bar{z}),\dots,\g^{nn}(\bar{z}))$ and $B$ denote the projection matrix onto the subspace orthogonal to $\omega_n$.  Given the decomposition above, we will need to consider at the following class of integrals more general than those in Theorem \ref{thm:theorem54}
\begin{multline}\label{wjkernel}
K_{t,s}(y,z)=\mu^{\frac n2} \iint e^{i\langle \xi, \bar{z}-x\rangle-i\langle \xi_{t,s}, \bar{y}-x_{t,s}\rangle}\phi(\mu^{\frac 12}(\bar{z}-x))\phi(\mu^{\frac 12}(\bar{y}-x_{t,s}))\\
\times\widetilde{\Gamma}(\xi) \Gamma_{j}(a(x,\xi))\Gamma_{j}(a(x_{t,s},\xi_{t,s})) \,dxd\xi
\end{multline}
where $\Gamma_j$ is defined as above with $n_\mu \leq j \leq N_\mu$ and
\begin{equation}\label{tildegamma}
\supp(\widetilde{\Gamma}) \subset \{ \xi: |\xi| \approx \mu, |\xi_1/|\xi| -(-e_1)| \lesssim \veps, \left| B\xi/|B\xi| - \eta\right| \lesssim \bar{\theta}\},
\end{equation}
for some unit vector $\eta$ orthogonal to $\omega_n$.  In particular, if $\bar{\theta}=1$, $W^j_t(W^j_s)^*$ takes this form.  Our first task to observe a generalization of Theorem \ref{thm:theorem54}.

\begin{theorem}\label{thm:wjbounds}
Suppose $\bar{\theta} = \min(1, \mu^{-\frac 12}|t-s|^{-\frac 12}) \geq 2^{-j}$ and $K_{t,s}(y,z)$ is defined by \eqref{wjkernel}, \eqref{tildegamma}.  Let $\zeta$ denote a fixed vector in the support of in the $\xi$-support of $\widetilde{\Gamma}(\cdot) \Gamma_{j}(a(\bar{z},\cdot))$ and $w_{t,s} = x_{t,s}(\bar{z},\zeta)$, $\nu_{t,s}=\xi_{t,s}(\bar{z},\zeta)/|\xi_{t,s}(\bar{z},\zeta)|$.
Then $K_{t,s}(y,z)$ satisfies the bounds
$$
|K_{t,s}(y,z)| \lesssim \mu^{n}\bar{\theta}^{n-2}2^{-j} (1+\mu\bar{\theta}|B\cdot(\bar{y}-w_{t,s})| + \mu|\langle \nu_{t,s}, \bar{y}-x_{t,s}\rangle|)^{-N}.
$$
\end{theorem}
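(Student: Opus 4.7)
The plan is to adapt the proof of Theorem \ref{thm:theorem54} (cf.\ \cite[Theorem 5.4]{smithsogge06}) by exploiting the additional $\xi$-localization imposed by the cutoffs $\Gamma_j(a(x,\xi))\Gamma_j(a(x_{t,s},\xi_{t,s}))$. Since $\g^{nm}_{\mu^{1/2}}$ is close to $\delta^{nm}$, the symbol $a(x,\xi) = \g^{nm}_{\mu^{1/2}}(x)\xi_m$ is a small perturbation of $\langle \omega_n, \xi\rangle$, so these cutoffs confine $\xi$ to a set in which the $\omega_n$-component has size $\mathcal{O}(\mu 2^{-j})$. This is the mechanism that replaces one factor of $\bar\theta$ in the prefactor of Theorem \ref{thm:theorem54} by $2^{-j}$ and eliminates decay in the $\omega_n$-direction, leaving decay only for the components of $\bar y - w_{t,s}$ captured by $B$.

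First I would identify the effective $\xi$-support of the integrand in \eqref{wjkernel}. Combining $\widetilde\Gamma$ (which enforces $|\xi|\approx \mu$, $\xi/|\xi|$ near $-e_1$, and $B\xi/|B\xi|$ within $\bar\theta$ of $\eta\perp \omega_n$) with the two $\Gamma_j(a)$ cutoffs, this support splits as a product of scales: length $\mu$ along the radial direction $\nu_{t,s}\approx -e_1$, length $\mu\bar\theta$ along each of the $n-2$ angular directions perpendicular to both $\nu_{t,s}$ and $\omega_n$, and length $\mu 2^{-j}$ along $\omega_n$. The total volume $\mu^n\bar\theta^{n-2}2^{-j}$ gives the claimed prefactor. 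The hypothesis $\bar\theta\geq 2^{-j}$ ensures the $\omega_n$-scale is compatible with (in fact finer than or equal to) the angular scale, so the product decomposition is meaningful, and the two $\Gamma_j(a)$-cutoffs are mutually consistent since by \eqref{aonpcurves} their values differ by at most $\mathcal{O}(\mu^{1/2}|t-s|)$, which is much smaller than $\mu 2^{-j}$ for $j\leq N_\mu$.

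Next I would perform integration by parts in $\xi$, paralleling the Smith--Sogge argument. The $x$-integration is effectively localized to a ball of radius $\mu^{-1/2}$ about $\bar z$ by $\phi(\mu^{1/2}(\bar z - x))$, so $(x_{t,s}(x,\xi),\xi_{t,s}(x,\xi))$ may be treated as $(w_{t,s},\nu_{t,s}|\xi_{t,s}(\bar z,\zeta)|)$ up to controlled errors from Theorem \ref{thm:bichar}. The $\xi$-gradient of the phase $\Psi(x,\xi)=\langle \xi,\bar z-x\rangle -\langle \xi_{t,s},\bar y-x_{t,s}\rangle$ then reduces, modulo admissible errors, to $-(\bar y - x_{t,s})$. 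Integrating $N$ times in the $\nu_{t,s}$-direction (where the $\xi$-support has length $\mu$) produces the decay $(\mu|\langle \nu_{t,s},\bar y - x_{t,s}\rangle|)^{-N}$, while $N$ integrations in the $n-2$ transverse angular directions (length $\mu\bar\theta$) yield $(\mu\bar\theta|B\cdot(\bar y - w_{t,s})|)^{-N}$. No integration by parts is performed in the $\omega_n$-direction; its length $\mu 2^{-j}$ contributes only to the volume prefactor.

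The main technical obstacle is ensuring that in the $n-2$ transverse directions the integration-by-parts weight $|\nabla_\xi\Psi|^{-1}$ is effectively controlled by $|B\cdot(\bar y - w_{t,s})|^{-1}$ rather than by $|B\cdot(\bar y - x_{t,s}(x,\xi))|^{-1}$, which depends on the variables of integration. The bounds \eqref{dxbounds}--\eqref{dxibounds} show that over the relevant support the difference $x_{t,s}(x,\xi)-w_{t,s}$ is small on the scale of the decay weight, so the resulting errors can be absorbed into the weight by taking $N$ slightly larger, exactly as in the proof of Theorem \ref{thm:theorem54}. Once this absorption is justified, combining the three decay factors with the volume prefactor gives the stated bound.
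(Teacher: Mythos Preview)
Your overall strategy—volume counting for the prefactor $\mu^n\bar\theta^{n-2}2^{-j}$ combined with integration by parts in $\xi$ following the Smith--Sogge template—is the right one and matches the paper. However, two concrete points in your execution do not go through as written.

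First, you omit integration by parts in the $\omega_n$-direction, saying its scale $\mu 2^{-j}$ contributes only to the volume. In the paper this step is not optional: an operator $L_n$ analogous to your transverse $L_k$'s but with weight $\mu 2^{-j}$ is applied, producing an intermediate decay factor $\mu 2^{-j}|\langle\omega_n, d_\xi\xi_{t,s}\cdot(\bar y - w_{t,s})\rangle|$ in the weight. This term is dropped in the final statement but is indispensable for absorbing errors when freezing $\xi_{t,s}(x,\xi)$ at $\xi_{t,s}(\bar z,\zeta)$. Indeed, writing $\xi-\zeta = B(\xi-\zeta)+(I-B)(\xi-\zeta)$, the $(I-B)$-piece contributes an error of size $\mu 2^{-j}|\langle\omega_n, d_\xi\xi_{t,s}\cdot(\bar y-w_{t,s})\rangle|$, which is \emph{exactly} matched by the $L_n$-weight. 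Without $L_n$ the only decay in the $\omega_n$-direction comes from the Schwartz factor $\phi$, at scale $\mu^{1/2}$; since $2^j\leq 2^{N_\mu}\approx \mu^{(1+\beta)/3}<\mu^{1/2}$ one has $\mu 2^{-j}>\mu^{1/2}$, so $\phi$ cannot absorb this error.

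Second, and this is what the paper singles out as the only genuinely new issue, you do not address what happens when the IBP operators fall on the cutoffs $\Gamma_j(a(x,\xi))$ and $\Gamma_j(a(x_{t,s},\xi_{t,s}))$. Each derivative of $\Gamma_j$ costs $2^j/\mu$, which is much worse than the other amplitude factors, so one must check that the directional derivatives $\langle\omega_k,d_\xi\rangle a$ and $\langle\omega_k,d_\xi\rangle[a(x_{t,s},\xi_{t,s})]$ compensate. The key is that $\langle\omega_k, d_\xi a(x,\cdot)\rangle = \langle\omega_k,(\g^{n1}(x),\dots,\g^{nn}(x))\rangle$ vanishes at $x=\bar z$ (by definition of $\omega_n$) and is therefore $\mathcal O(|x-\bar z|)$, which is controlled by $\mu^{1/2}|x-\bar z|$ from $\phi$ since $\bar\theta 2^j\ll\mu^{1/2}$. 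For the flowed cutoff $\Gamma_j(a(x_{t,s},\xi_{t,s}))$ one needs the bounds on $d_\xi x_{t,s}$, $d_\xi\xi_{t,s}$ and their higher derivatives to see the analogous cancellation; this takes some care and is the heart of the paper's argument. Your sketch passes over both of these points, treating the cutoffs as inert.
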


\begin{proof}
The proof is only a slight modification of the argument in \cite[p.152]{smithsogge06} and hence we only outline the significant differences.  Indeed, the only alteration is that in our case, $a(x,\xi)$ replaces $\xi_n$ and the factor $\Gamma_{j}(a(x_{t,s},\xi_{t,s}))$ is also present.  Let $\omega_1,\dots, \omega_{n}$ be an orthonormal basis on $\RR^n$ containing $\omega_n$.  We then define the following vector fields which preserve the phase in \eqref{wjkernel}
\begin{align*}
L_0 &= \frac{1-i(\langle \xi, \bar{z}-x\rangle-\langle \xi_{t,s}, \bar{y}-x_{t,s}\rangle)\langle \xi,d_\xi\rangle}{1+|\langle \xi, \bar{z}-x\rangle-\langle \xi_{t,s}, \bar{y}-x_{t,s}\rangle|^2},\\
L_k &= \frac{1-i(\mu\bar{\theta})^2\langle \omega_k, \bar{z}-x-d_\xi \xi_{t,s}\cdot(\bar{y}-x_{t,s})\rangle \langle \omega_k,d_\xi\rangle}{1+\mu^2\bar{\theta}^2 |\langle \omega_k, \bar{z}-x-d_\xi \xi_{t,s}\cdot(\bar{y}-x_{t,s})\rangle|^2}, \qquad 1\leq k \leq n-1,
\end{align*}
and define $L_n$ analogously to $L_k$ above with $\omega_n$ replacing $\omega_k$ and $2^{-j}$ replacing $\bar{\theta}$.  The idea to is integrate by parts in \eqref{wjkernel} using these vector fields.  We display the following bounds on the derivatives of $\Theta_{t,s}(x,\xi)$ in $x,\xi$ due to Smith-Sogge
\cite[(5.6), (5.7), (5.11), (5.12)]{smithsogge06}
\begin{align*}
|d_x^2 x_{t,s}| & \lesssim\langle \mu^{\frac 12}|t-s|\rangle, & |d^2_x \xi_{t,s}|& \lesssim \mu^{\frac 12},\\
|d_x d_\xi x_{t,s}| & \lesssim |t-s|\langle\mu^{\frac 12}|t-s|\rangle, & |d_x d_\xi \xi_{t,s}|& \lesssim \langle\mu^{\frac 12}|t-s|\rangle,\notag
\end{align*}
\begin{equation}\label{xiderivs}
|d^k_\xi x_{t,s}|+ |d^k_\xi \xi_{t,s}| \lesssim |t-s|\langle \mu^{\frac 12}|t-s|\rangle^{k-1}, \quad k \geq 2,
\end{equation}
\begin{equation*}
|(\xi\cdot d_\xi)^j (\mu\bar{\theta}d_\xi)^\alpha\mu^{\frac 32} \bar{\theta}d_\xi x_{t,s}| \lesssim 1,
\quad |(\xi\cdot d_\xi)^j (\mu \bar{\theta}d_\xi)^\alpha\mu\bar{\theta}\langle d_\xi \xi_{t,s}, \bar{y}-x_{t,s}\rangle| \lesssim \langle \mu^{\frac 12}|\bar{y}-x_{t,s}|\rangle,
\end{equation*}
where the last one is valid for $j+|\alpha| \geq 1$.  In \cite{smithsogge06}, these bounds were used to prove Theorem \ref{thm:theorem54} above and the aforementioned estimates.

Here the first crucial matter is to observe that the result of applying powers of the differential operators $\langle \xi,d_\xi\rangle$ and $\mu\bar{\theta}\langle \omega_k,d_\xi\rangle$ for $k=1,\dots,n-1$ to $\Gamma_{j}^{(i)}(a(x,\xi))$, $\Gamma_{j}^{(i)}(a(x_{t,s},\xi_{t,s}))$ is dominated by the other factors in the integrand.  Powers of $\langle \xi, d_\xi\rangle$ are easily handled by homogeneity.  Differentiating $\Gamma_{j}^{(i)}$ yields a gain of $\mu^{-1}2^{j}$ while derivatives of $\bar{\theta}2^{j}a(x,\xi)$ in the direction of $\omega_k$ are
$$
\bar{\theta}2^{j} \langle \omega_k, d_\xi\rangle a(x,\xi) = \bar{\theta}2^{j}\langle \omega_k,\g^{nm}(x)-\g^{nm}(\bar{z})\rangle.
$$
Since $\bar{\theta}2^j \lesssim \mu^{\frac 13(1+\beta)}\ll \mu^{\frac 12}$, this is dominated by $\mu^{\frac 12}|x-\bar{z}|$.

For $\Gamma_{j}^{(i)}(a(x_{t,s},\xi_{t,s}))$, first consider a single power of $\bar{\theta}2^j\langle \omega_k,d_\xi\rangle$ on $a(x_{t,s},\xi_{t,s})$
\begin{multline}\label{aflow}
\bar{\theta}2^j \langle \omega_k,d_\xi\rangle a(x_{t,s}(x,\xi),\xi_{t,s}(x,\xi)) = \bar{\theta}2^j \langle \omega_k,d_\xi\rangle \left( \g^{nm}(x_{t,s})(\xi_{t,s})_m\right)=\\  \bar{\theta}2^j \left( d_x \g^{nm}(x_{t,s})\cdot \langle \omega_k, d_\xi x_{t,s}\rangle (\xi_{t,s})_m + \g^{nm}(x_{t,s})\langle \omega_k, d_\xi (\xi_{t,s})_m \rangle\right).
\end{multline}
The first term on the right is bounded as $|d_\xi x_{t,s}(x,\xi)||\xi_{t,s}|\lesssim |t-s|$ and $\bar{\theta}2^j |t-s| \ll 1$. For the second, we rewrite the sum in $m$ as
$$
\left(\g^{nm}(x_{t,s})- \g^{nm}(\bar{z})\right)\langle \omega_k, d_\xi (\xi_{t,s})_m \rangle + \g^{nm}(\bar{z})\langle \omega_k, d_\xi (\xi_{t,s})_m -e_m\rangle.
$$
The second term is $\mathcal{O}(|t-s|)$ and can be dominated as before.  For the first term we use that
$$
|x_{t,s}(x,\xi)-\bar{z}| \leq |x_{t,s}(x,\xi)-x|+|x-\bar{z}|.
$$
The first term here is $\mathcal{O}(|t-s|)$ and the second can be treated as above.  For higher derivatives of \eqref{aflow}, we simply use homogeneity and \eqref{xiderivs} to see that the result of applying $l$ additional powers of $\bar{\theta}2^j\langle \omega_k,d_\xi\rangle$ is bounded by $(\bar{\theta}2^j)^{l+1}|t-s|\mu^{-\frac l2}\ll 1$.

Integration by parts using $L_0,\dots,L_n$ gives that $|K_{t,s}(y,z)|$ is dominated by
\begin{multline}\label{firstKbound}
\mu^{\frac n2} \iint (1+\mu^{\frac 12}|\bar{z}-x|+\mu^{\frac 12}|\bar{y}-x_{t,s}|)^{-N} (1+\mu\bar{\theta}|B\cdot(\bar{z}-x- d_{\xi}\xi_{t,s}\cdot(\bar{y}-x_{t,s}))| \\ + \mu 2^j|\langle \omega_n,\bar{z}-x-d_{\xi}\xi_{t,s}\cdot(\bar{y}-x_{t,s})\rangle|+|\langle \xi, \bar{z} -x \rangle - \langle \xi_{t,s}, \bar{y}-x_{t,s}\rangle| )^{-N} \,d\xi dx
\end{multline}
and we may assume that the values of $\xi$ are restricted to $\xi \in \supp(\widetilde{\Gamma}(\cdot) \Gamma_{j}(a(x,\cdot)))$.

Now observe that if $\xi$ is such a vector and $\tilde{\xi}$, $\tilde{\zeta}$ are vectors in the direction of $\xi$, $\zeta$ normalized so that $|B\tilde{\xi}|=|B\tilde{\zeta}|=1$, then
\begin{equation}\label{conebounds}
|B(\tilde{\xi}-\tilde{\zeta})| \lesssim \bar{\theta}, \qquad |(I-B)(\tilde{\xi}-\tilde{\zeta})| \lesssim |\bar{z}-x| + 2^{-j}.
\end{equation}
The first of the two inequalities is evident from the support condition on $\widetilde{\Gamma}$, the second follows by observing that $|B\xi|,|B\zeta|\approx \mu$ and
$$
\g^{nm}(\bar{z})\tilde{\xi}_m -\g^{nm}(\bar{z})\tilde{\zeta}_m = \left(\g^{nm}(\bar{z})-\g^{nm}(x)\right)\tilde{\xi}_m + \g^{nm}(x)\tilde{\xi}_m -\g^{nm}(\bar{z})\tilde{\zeta}_m
$$
Given \eqref{conebounds}, the proof of \cite[(5.13)]{smithsogge06} goes through with only minor adjustments. Hence we have that \eqref{firstKbound} is further dominated by
\begin{multline}\label{secondKbound}
\mu^{\frac n2} \iint (1+\mu\bar{\theta}|B\cdot d_{\xi}\xi_{t,s}\cdot(\bar{y}-w_{t,s})|  + \mu 2^{-j}|\langle \omega_n,d_{\xi}\xi_{t,s}\cdot(\bar{y}-w_{t,s})\rangle|+|\langle \xi_{t,s}, \bar{y}-w_{t,s}\rangle| )^{-N}\\
\times(1+\mu^{\frac 12}|\bar{z}-x|+\mu^{\frac 12}|\bar{y}-x_{t,s}|)^{-N}  \,d\xi dx \end{multline}
where $\xi$ values are restricted as before.  Observe that since $\mu^{\frac 12} \ll \mu2^{-j} \leq \mu\bar{\theta}$ and $d_\xi \xi_{t,s}$ is invertible, the middle two terms in the first factor dominate $\mu^{\frac 12}|\bar{y}-w_{t,s}|$.

We next see that we may replace $\xi_{t,s}$  by $\xi_{t,s}(\bar{z},\zeta)$ in the expression $\langle \xi_{t,s}, \bar{y}-w_{t,s}\rangle$. Without loss of generality, we may assume that $|B\xi|=|B\zeta|$.  We note that
$$
|\langle\xi_{t,s}(x,\zeta) - \xi_{t,s}(\bar{z},\zeta), \bar{y}-w_{t,s} \rangle| \lesssim \mu |x-\bar{z}||\bar{y}-x_{t,s}|.
$$
It now remains to bound $|\langle\xi_{t,s}(x,\xi) - \xi_{t,s}(x,\zeta), \bar{y}-w_{t,s} \rangle|$.  We thus write
$$
\xi_{t,s}(x,\xi) - \xi_{t,s}(x,\zeta) = (\xi-\zeta)\cdot d_\xi \xi_{t,s}+ \mathcal{O}(|\xi-\zeta|^2\mu^{-\frac 12}|t-s|).
$$
For the first term here, note that
$$
(\xi-\zeta)\cdot d_\xi \xi_{t,s}= (\xi-\zeta)\cdot B\cdot d_\xi \xi_{t,s} + (\xi-\zeta)\cdot(I-B)\cdot d_\xi \xi_{t,s}
$$
Since $B$ is an orthogonal projection and
$$
|B\cdot(\xi-\zeta)| \lesssim \mu\bar{\theta}, \qquad |(I-B)\cdot(\xi-\zeta)| \lesssim \mu 2^{-j}+ \mu|x-\bar{z}|,
$$
the error induced by the first term here is dominated by the other terms in the integrand in \eqref{secondKbound}.  We then use $\mu \bar{\theta}^2|t-s| \leq 1$ bound the error term similarly.

Also, replacing $d_\xi \xi_{t,s}$ by the identity matrix in \eqref{secondKbound} yields an acceptable error as it is bounded by
$$
\mu\bar{\theta}|t-s||\bar{y}-x_{t,s}| \lesssim \mu^{\frac 12}|\bar{y}-x_{t,s}|.
$$

Finally, for each $x$, the region of integration in $\xi$ can be restricted to a set of volume $\approx \mu^n \bar{\theta}^{n-2}2^{-j}$, which is enough to conclude the proof.
\end{proof}

Note that by \eqref{c0}, we may assume that the difference between $B$ and projection onto the first $n-1$ coordinates yields an error which is no more than $\mathcal{O}(c_0)$.  Moreover, since $|\nu_{t,s}-e_1| \lesssim \veps + c_0$, we have that as a consequence of this theorem
\begin{equation}\label{integratedK}
\int |K_{t,s}(y,z)|\,dy \lesssim \mu 2^{-j}.
\end{equation}

We now begin the proof of \eqref{wjopbound} when $j=N_\mu$, claiming there exists $\tilde{c}_1$ such that
\begin{equation}\label{tgtseparation}
W^{N_\mu}_t (W^{N_\mu}_s)^*=0 \qquad \text{ whenever } \qquad |t-s| \geq \tilde{c}_1 \mu^\beta 2^{-N_\mu}.
\end{equation}
To see this, recall that the kernel of $W^{N_\mu}_t (W^{N_\mu}_s)^*$ is given by an integral of the form \eqref{wjkernel} with $\bar{\theta}=1$.  Since $\mu^{-\beta} \gg \mu^{-\frac 12}$, by \eqref{parabxi} and \eqref{aonpcurves}, there exists a constant $\tilde{c}_1$, inversely proportional to $c_1$ above, such that $|a(x_{t,s}(x,\xi),\xi_{t,s}(x,\xi))| \geq  \mu 2^{-N_\mu+2}$ whenever $\mu^{-\beta}|t-s| \geq \tilde{c}_12^{-N_\mu}$ and $\xi \in \supp(\Gamma_{N_\mu})$.

Turning to the case $|t-s| \leq \tilde{c}_1\mu^{\beta}2^{-N_\mu}$, take a collection of unit vectors $\eta^i$ orthogonal to $\omega_n$ and mutually separated by a distance $\approx \bar{\theta}$ so that \eqref{etasep} holds.  Now write $K_{t,s} = \sum_i K_i(y,z)$ where each $K_i$ is defined as in \eqref{wjkernel} with $\eta$ replaced by $\eta^i$.  Next observe that $|\eta^j-\eta^l| \lesssim |(\eta^j-\eta^l)'|$, which can be seen by noting that the linear map which projects the subspace orthogonal to $\omega_n$ onto its first $n-1$ components is invertible and depends continuously on $z$.  An adjustment of the almost orthogonality argument in \eqref{firstloc} thus shows that the operators $T_l^*T_j$, $T_lT_j^*$ vanish if $\bar{\theta}^{-1}|\eta^i-\eta^j| \geq C$ for some large $C$.  Observe that
\begin{equation}\label{youngsNmu}
\int |K_i(y,z)| \,dy + \int |K_i(y,z)| \,dz \lesssim \mu 2^{-N_\mu} .
\end{equation}
But the first half of this is a consequence of \eqref{integratedK} and the second half follows by symmetry and the same bound.  Indeed the theorem applies here as our assumption on $|t-s|$ means that $\bar{\theta} \gtrsim \mu^{-\frac 12(1+\beta)}2^{\frac{1}{2}N_\mu} \approx 2^{-\frac{3}{2}N_\mu+\frac{1}{2}N_\mu}=2^{-N_\mu}$.
The bound \eqref{wjopbound} now follows by duality since Young's inequality in $t,s$ gives
\begin{equation}\label{wNmudualbound}
\|W^{N_\mu}\left(W^{N_\mu}\right)^*\|_{L^2_{s,z} \to L^2_{t,y} } \lesssim \mu2^{-N_\mu}\cdot \mu^{\beta}2^{-N_\mu} \approx 2^{N_\mu}.
\end{equation}

For $n_\mu \leq j < N_\mu$, we take a partition of unity over $\RR^{n-1}$, $\sum_l \chi(y-l) \equiv 1$ such that the sum is taken over $l \in \mathbb{Z}^{n-1}$ and $\supp(\chi) \subset [-1,1]^{n-1}$. Use this to define
$$
\chi_l(y):=\chi(\mu^{-\beta}2^{j}y-l) \quad \text{and}\quad W^{j,l}\tilde{f}(t,y) := \chi_l(y)W^{j}\tilde{f}(t,y)
$$
and we consider only those $l$ such that $\supp(\chi_l)$ intersects $(-1,1)^n$.  By the support properties of $\chi$ we may take $C$ sufficiently large so that $(W^{j,m})^*W^{j,l}$ vanishes whenever $|l-m| \geq C$.  We next claim that we can take $C$ so that
\begin{equation}\label{cotlar}
\|W^{j,l}(W^{j,m})^*\|_{L^2 \to L^2} \lesssim \mu^{-N} \qquad \text{whenever } |l-m| \geq C.
\end{equation}
Since there is at most $\mathcal{O}(\mu^{\frac{n-1}{3}})$ of the $W^{j,l}$, the estimate \eqref{wjopbound} on $W^j$ will follow by Cotlar's lemma and Young's inequality provided we can show
\begin{equation}\label{ntanboundloc}
\|W^{j,l}_t (W^{j,l}_s)^*h\|_{L^2_{y}} \lesssim \mu 2^{-j}(1+\mu 2^{-2j}|t-s|)^{-2}\|h\|_{L^2_{y}}.
\end{equation}

In order to show \eqref{cotlar}, we can write the kernel of the operator, denoted by $K_{t,s}^{l,m}(y,z) $, as the product of $\chi_l(y)\chi_m(z)$ with an integral of the form \eqref{wjkernel} with $\bar{\theta}=1$.  Given the compact support of $K_{t,s}^{l,m}$ in $y$ and $z$ it suffices to show that this integral is dominated by $\mu^{-N}$ for any $N$.  Similar to the $j=N_\mu$ case, if $\xi \in \supp(\Gamma_j)$ and $|t-s| \geq \tilde{c}_1\mu^\beta 2^{-j}$ for some $\tilde{c}_1$ depending only on $c_1$, then $\Gamma_{j}(a(x_{t,s},\xi_{t,s}))=0$, meaning the kernel vanishes for such $t,s$.  When $|t-s| \leq \tilde{c}_1\mu^\beta 2^{-j}$, we use that
$$
|x_{t,s}(\bar{z},\xi)-x_{t,s}(x,\xi)|\lesssim |\bar{z}-x|
$$
to dominate the integral in \eqref{wjkernel} simply by
$$
\mu^{\frac n2}\iint (1+\mu^{\frac 12}|\bar{z}-x| + \mu^{\frac 12}|\bar{y}-x_{t,s}(\bar{z},\xi)|)^{-2N} \widetilde{\Gamma}(\xi)\Gamma_{j}(a(x,\xi))\,dxd\xi.
$$
Using the elementary estimate $|x_{t,s}(\bar{z},\xi) - \bar{z}| \leq 2|t-s|,$
we see that if $|l-m|\geq 2^4\tilde{c}_1$,
\begin{align*}
|\bar{y}-x_{t,s}(\bar{z},\xi)|&\geq | y - z| - 2|t-s|\\
&\geq \mu^\beta2^{-j-2}|l-m| - 2\tilde{c}_1\mu^\beta2^{-j}\geq \mu^\beta 2^{-j} \geq \mu^{-\frac 13}
\end{align*}
and hence $\mu^{\frac 12}|\bar{z}-x_{t,s}(\bar{y},\xi)|\gtrsim \mu^{\frac 16}$.  This implies the desired bound on $K_{t,s}^{l,m}(y,z) $.

We now turn to \eqref{ntanboundloc}.  It suffices to restrict attention to $|t-s| \leq \tilde{c}_1\mu^\beta 2^{-j}$, though this does not play a crucial role in the argument.  First consider the case where $t,s$ satisfy $|t-s| \leq \mu^{-1}2^{2j}$.  We begin by observing that a slight adjustment of the almost orthogonality argument in \eqref{firstloc} and preceding \eqref{wNmudualbound} allows us to assume that the kernel $K^{l,l}_{t,s}(y,z)$ of $W_t^{j,l}(W_s^{j,l})^*$ is the product of $\chi_l(y)\chi_l(z)$ and an integral of the form \eqref{wjkernel} with $\bar{\theta} = \min(1,\mu^{-\frac 12}|t-s|^{-\frac 12})$.  Indeed, reasoning as in \eqref{firstlockernel}, we are lead to consider the integral
$$
\int e^{i\langle \xi, \bar{y}-x\rangle-i\langle \tilde{\xi}, \bar{y}-\tilde{x}\rangle}\phi(\mu^{\frac 12}(\bar{y}-x))\phi(\mu^{\frac 12}(\bar{y}-\tilde{x}))\chi_l^2(y)\,dy.
$$
While this integral does not vanish when $\mu^{\frac 12} \ll \mu \bar{\theta} \leq |\xi-\tilde{\xi}|$, we may bound its absolute value by $C_N\mu^{-N}$ for any $N$, which is just as effective.  Indeed, we may take the Fourier transform similarly to \eqref{firstloc} and since the Fourier transform of $\chi^2_l$ is concentrated (though not localized) in a ball of radius $\mu^{-\beta}2^{j} \leq \mu^{\frac 13}\ll \mu^{\frac 12}$, the rapid decay in $\mu$ follows.  We now conclude \eqref{ntanboundloc} for $|t-s| \leq \mu^{-1}2^{2j}$ by applying \eqref{integratedK} and reasoning analogously to \eqref{youngsNmu}.

To show \eqref{ntanboundloc} when $|t-s| > \mu^{-1}2^{2j}$, we take the decomposition used in \S3, writing the kernel $K^{l,l}_{t,s} = \sum_i K_i$ with $K_i$ is defined by replacing the $\Gamma$ in \eqref{Kdef} by a smooth cutoff $\widetilde{\Gamma}_{j,i}$ to a region of the form
$$
\{\xi \in \supp(\tilde{\Gamma}(\cdot)\Gamma_j(a(x,\cdot)): -\xi_1 \approx \mu, |\xi/|\xi|-\eta^i| \lesssim \bar{\theta}\}, \qquad \bar{\theta} = \mu^{-\frac 12}|t-s|^{-\frac 12}
$$
where $\eta_i \in \mathbb{S}^{n-1}$.  As before, we assume that $\eta^i$ are separated so that \eqref{etasep} holds.  The estimates \eqref{kthetabarbound} in Theorem \ref{thm:theorem54} give
\begin{equation}\label{smsoki}
|K_i(y;z)| \lesssim \mu^{n}\bar{\theta}^{n-1}(1+\mu\bar{\theta}|\bar{y}-x_{t,s}^i| + \mu|\langle \nu_{t,s}^i, \bar{y}-x_{t,s}^i\rangle|)^{-N}
\end{equation}
with $x^i_{t,s}=x_{t,s}(\bar{z},\eta^i)$. We will show that
\begin{equation}\label{twojdecay}
|\bar{y}-x_{t,s}^i| \gtrsim 2^{-j}|t-s|.
\end{equation}
Together with our assumption on $t,s$ this gives $\mu^{\frac 12}2^{-j}|t-s|^{\frac 12} \lesssim \mu\bar{\theta}|\bar{y}-x_{t,s}^i|$, and hence this additional decay and the almost orthogonality arguments above can be integrated into the proof of \eqref{Jclaim} to obtain
$$
\|W^{j,l}_t (W^{j,l}_s)^*\|_{L^2 \to L^2} \lesssim \mu\bar{\theta}(1+\mu 2^{-2j}|t-s|)^{-2} \leq \mu 2^{-j}(1+\mu 2^{-2j}|t-s|)^{-2}.
$$

To show \eqref{twojdecay}, first consider  $t,s$ satisfying $\mu^{-1}2^{2j} < |t-s| \leq \mu^\beta 2^{-j+3}$ (note that this is nontrivial when $2^j < 2\mu^{\frac 13(1+\beta)}\approx 2^{N_\mu}$, a relevant consequence of the $j<N_\mu$ threshold discussed above).  We may assume $c_0$ in \eqref{c0} is sufficiently small  and use a linear approximation of the $n$-th component of $x_{t,s}\left(\bar{z},\eta^i\right)$ in \eqref{linearxnp} to obtain
\begin{equation}\label{linconseq}
\left|\left(x_{t,s}\right)_n\left(\bar{z},\eta^i\right)\right| \gtrsim 2^{-j}|t-s|
\end{equation}
since the $n$-th component of $\bar{z}$ vanishes.  Indeed, over this time scale, the error term is smaller than the linearization.

Now assume that $|t-s| \geq \mu^\beta 2^{-j+3}$.  Taking $\veps$, $c_0$ sufficiently small, we have that
$$
\left| (x_{t,s})_1(\bar{z},\eta^i) - z_1\right| = \left| \int_{s}^t \prtl_{\xi_1}p(r,\Theta_{r,s}(\bar{z},\eta^i))\,dr\right| \geq \frac 12 |t-s|
$$
Using that $y,z \in \supp(\chi_l)$ we have that $|y-z| \leq 2^{-j+1}\mu^\beta \leq \frac 14 |t-s|$ and hence we have the stronger bound
$$
\left| (x_{t,s})_1(\bar{z},\eta^i) - y_1\right| \geq \frac 12 |t-s| - |z_1-y_1| \geq \frac 14 |t-s|.
$$


\begin{thebibliography}{99}
\bibitem{ariturk} Ariturk, S. \emph{Concentration of eigenfunctions near a concave boundary}. Comm. Partial Differential Equations. \textbf{2011}, 36 (11), 1881-1918.

\bibitem{bourgain} Bourgain, J. \emph{Geodesic restrictions and $L^p$-estimates for eigenfunctions of Riemannian surfaces}, Linear and complex analysis, Amer. Math. Soc. Transl. (2), \textbf{2009}, 226, Amer. Math. Soc., Providence, RI,  27–35.

\bibitem{bssmulti}
        Blair, M.D.; Smith, H.; Sogge, C. \emph{On multilinear spectral cluster
        estimates for manifolds with boundary}.  Math. Res. Lett.  \textbf{2008}, 15 (3), 419-426.

\bibitem{bgtrestr} Burq, N.; G\'{e}rard, P.; Tzvetkov, N. \emph{Restrictions of the Laplace-Beltrami eigenfunctions to submanifolds}.  Duke Math. J.  \textbf{2007},  138 (3), 445-486.

\bibitem{cuccagna} Cuccagna, S. \emph{$L^2$ estimates for averaging operators along curves with two-sided $k$-fold singularities}. \textbf{1997}, 89 (2), 203-216.

\bibitem{GT} Gilbarg D.; Trudinger, N. {\em Elliptic Partial Differential Equations of Second Order.} 2nd edition. Springer-Verlag, New York, \textbf{1983}.

\bibitem{gray} Gray, A. \emph{Tubes}, 2nd Edition, Birkh\"auser, Basel, \textbf{2004}.

\bibitem{greenleafseeger} Greenleaf, A.; Seeger, A. \emph{Fourier integral operators with fold singularities}. J. Reine Angew. Math. \textbf{1994}, 455, 35-56.

\bibitem{hasselltacy} Hassell, A; Tacy, M. \emph{Semiclassical $L^p$ estimates of quasimodes on curved hypersurfaces}. J. Geom. Anal. \textbf{2012}, 22 (1), 74-89.

\bibitem{hormander4} H\"ormander, L. \emph{The Analysis of Linear Partial Differential Operators IV}, Springer-Verlag, Berlin Heidelberg,  \textbf{1985}.

\bibitem{Hu} Hu, R. \emph{$L^p$ norm estimates of eigenfunctions restricted to submanifolds.} Forum Math. \textbf{2009}, 21, 1021-1052.

\bibitem{kstsharp} Koch, H.; Smith, H.; Tataru, D. \emph{Sharp $L^p$ bounds on spectral clusters for Lipschitz metrics.}  Preprint, http://arxiv.org/abs/1207.2417

\bibitem{meltaykirchoff}  Melrose, R.; Taylor, M. \emph{Near peak scattering and the corrected Kirchhoff approximation for a convex obstacle}.  Adv. in Math. \textbf{1985}, 55 (3), 242-315.

\bibitem{mss93} Mockenhaupt, G.; Seeger, A.; Sogge, C. D.  \emph{Local smoothing of Fourier integrals and Carleson-Sj\"{o}lin estimates.} J. Amer. Math. Soc. \textbf{1993}, 6, 65-130.

\bibitem{pansogge} Pan, Y.; Sogge, C.D.  \emph{Oscillatory integrals associated to folding canonical relations}.  Colloq. Math. \textbf{1990}, 61, 413-419.

\bibitem{reznikov} Reznikov, A. \emph{Norms of geodesic restrictions on hyperbolic surfaces and representation theory.}  Preprint, http://arxiv.org/abs/math/0403437.
\bibitem{saloffcoste} Saloff-Coste, L. \emph{Uniformly elliptic operators on Riemannian manifolds}. J. Differential Geometry \textbf{1992}, 36, 417-450.

\bibitem{smithsogge06} Smith, H.; Sogge, C. \emph{On the $L^p$ norm of spectral clusters for compact manifolds with boundary}.  Acta Math. \textbf{2007}, 198 (1), 107-153.

\bibitem{SmC2} Smith, H.F., {\em Spectral cluster estimates for $C^{1,1}$ metrics}. Amer. J. Math. \textbf{2006}, 128, 1069-1103.

\bibitem{SmC1alph} Smith, H. {\em Sharp $L^2 \to L^q$ bounds on spectral projectors for low regularity metrics}. Math. Res. Lett. {\bf 2006}, 13 (6), 965-972.

\bibitem{sS94} Smith, H.; Sogge, C. \emph{On Strichartz and eigenfunction
        estimates for low regularity metrics}. \textbf{1994}, Math. Res. Lett.
        1 (6), 729-737.

\bibitem{smithtatexample} Smith, H.; Tataru, D. \emph{Sharp counterexamples for Strichartz estimates for low regularity metrics.}  Math. Res. Lett. \textbf{2002},  9 (2-3), 199-204.

\bibitem{sogge88} Sogge, C. \emph{Concerning the $L^p$ norm of spectral clusters
        for second-order elliptic operators on compact manifolds.}
        J. Funct. Anal. \textbf{1988}, 77 (1), 123-138.

\bibitem{soggekaknik} Sogge, C. \emph{Kakeya-Nikodym averages and $L^p$-norms of eigenfunctions}. Tohoku Math. J. (2), \textbf{2011}, 63 (4), 519-538.


\bibitem{tacy} Tacy, M. \emph{Semiclassical $L^p$ Estimates of Quasimodes on Submanifolds}. Comm. Partial Differential Equations. \textbf{2010}, 35 (8), 1538-1562.

\bibitem{tatarubdry} Tataru, D.  \emph{On the regularity of boundary traces for the wave equation}. Ann. Scuola Norm. Sup. Pisa Cl. Sci. \textbf{1998}, 26, 185-206.

\bibitem{tay91} Taylor, M.E., {\em Pseudodifferential Operators and Nonlinear PDE}.  Birkhh\"auser, Boston, \textbf{1991}.

\end{thebibliography}
\end{document}